\crefname{hypothesis}{Hypothesis}{Hypotheses}
\title{Linearly convergent adjoint free solution of least squares problems by random descent
\thanks{
\funding{This work has received funding from the European Union’s Framework Programme for Research and Innovation Horizon 2020 (2014-2020) under the Marie Sk\l odowska-Curie Grant Agreement No. 861137.}}}
\author{
Dirk A. Lorenz\thanks{Institute of Analysis and Algebra, TU Braunschweig, Braunschweig, Germany 
  (\email{d.lorenz@tu-braunschweig.de}, \url{https://www.tu-braunschweig.de/en/iaa/personal/lorenz}).}
\and Felix Schneppe\thanks{Institute of Analysis and Algebra, TU Braunschweig, Braunschweig, Germany 
  (\email{f.schneppe@tu-braunschweig.de}, \url{https://www.tu-braunschweig.de/iaa/personal/schneppe}).}
  \and Lionel Tondji\thanks{Institute of Analysis and Algebra, TU Braunschweig, Braunschweig, Germany 
  (\email{l.ngoupeyou-tondji@tu-braunschweig.de}, \url{https://www.tu-braunschweig.de/iaa/personal/l-ngoupeyou-tondji}).}
}
\newcommand{\RR}{\mathbb{R}}
\newcommand{\EE}{\mathbb{E}}
\newcommand{\PP}{\mathbb{P}}
\renewcommand{\SS}{\mathbb{S}}
\newcommand{\cN}{\mathcal{N}}
\newcommand{\scp}[2]{\langle{#1},{#2}\rangle}
\newcommand{\rg}{\operatorname{rg}}
\newcommand{\dd}{\operatorname{d}}
\newcommand{\mleq}{\preccurlyeq}
\newcommand{\mgeq}{\succcurlyeq}
\DeclarePairedDelimiter{\norm}{\|}{\|}
\DeclareMathOperator{\Unif}{Unif}
\DeclareMathOperator{\diag}{diag}
\DeclareMathOperator{\trace}{trace}
\DeclareMathOperator{\rank}{rank}
\newcommand*{\addFileDependency}[1]{
  \typeout{(#1)}
  \@addtofilelist{#1}
  \IfFileExists{#1}{}{\typeout{No file #1.}}
}
\begin{document}

\maketitle

\begin{abstract}
  We consider the problem of solving linear least squares problems in a framework where only evaluations of the linear map are possible. We derive randomized methods that do not need any other matrix operations than forward evaluations, especially no evaluation of the adjoint map is needed. Our method is motivated by the simple observation that one can get an unbiased estimate of the application of the adjoint. We show convergence of the method and then derive a more efficient method that uses an exact linesearch. This method, called random descent, resembles known methods in other context and has the randomized coordinate descent method as special case. We provide convergence analysis of the random descent method emphasizing the dependence on the underlying distribution of the random vectors. Furthermore we investigate the applicability of the method in the context of ill-posed inverse problems and show that the method can have beneficial properties when the unknown solution is rough. We illustrate the theoretical findings in numerical examples. One particular result is that the random descent method actually outperforms established transposed-free methods (TFQMR and CGS)  in examples.
\end{abstract}

\begin{keywords}
  randomized algorithms, least squares problems, stochastic gradient descent
\end{keywords}

\begin{AMS}
  65F10,
  68W20,
  15A06
\end{AMS}

\section{Introduction}
\label{sec:intro}

\subsection{Problem statement}
\label{sec:prob-statement}

We consider the basic problem of solving linear least squares problems 
\begin{align}\label{eq:least-squares}
\min_{v\in\RR^{d}}\tfrac12\norm{Av-b}^{2}
\end{align}
with a linear map $A$ from $\RR^{d}$ to $\RR^{m}$ and $b\in\RR^{m}$. We revisit this classical problem under the following restricting assumptions:
\begin{itemize}
\item $A$ is not given as a matrix, but only an implementation is available that takes as input a vector $v\in\RR^{d}$ and returns the output $Av\in\RR^{m}$.
\item It is not feasible to store many vectors of either size $d$ or $m$ due to memory constraints (especially we assume that it is not possible to build large parts of the full matrix by computing $Ae_{i}$ for all standard basis vectors $e_i$).
\item The adjoint linear map (i.e. the transposed matrix $A^{T}$) can not be evaluated. (One can, in principle, evaluate the adjoint map by applying automatic differentiation to the map $x\mapsto\scp{y}{Ax}$. However, this requires that the programming language allows for automatic differentiation and that the program is written in a suitable way, which, however, is not always the case.)
\end{itemize}
We do not make any structural assumptions on  the linear map $A$. Especially we consider the square, over-, and under-determined cases and the cases of full rank and rank deficient maps.
If the evaluation of $A^{T}$ is not possible, one can not immediately calculate the gradient of the least square function, as this also involves the application of the adjoint linear map.

In this paper we will derive randomized methods which do provably converge to a solution of the least squares functional (in expectation) under our restrictive assumptions formulated above.
The idea behind these algorithms is a simple observation: There is a simple way to get an unbiased sample of the gradient $A^{T}(Av-b)$ which only uses evaluations of $A$:
\begin{lemma}\label{lem:sampled-gradient}
  Let $x\in\RR^{d}$ be a random vector such that $\EE(xx^{T}) = I_{d}$. Then it holds that 
\begin{align*}
\EE(\scp{Av-b}{Ax}x) = A^{T}(Av-b),
\end{align*}
in other words, $\scp{Av-b}{Ax}x$ is an unbiased estimator for the gradient of $\tfrac12\norm{Av-b}^{2}$.
\end{lemma}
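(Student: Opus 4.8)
The plan is to reduce the identity to the defining assumption $\EE(xx^{T}) = I_{d}$ by a direct algebraic manipulation, after first isolating the deterministic part of the expression. I would abbreviate $r := Av - b$, which is a constant vector in $\RR^{m}$ because $v$ and $b$ do not depend on the random vector $x$. With this notation the claim reads $\EE(\scp{r}{Ax}x) = A^{T}r$, so that all of the randomness sits in $x$ and the target $A^{T}r$ is fixed.

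Next I would rewrite the scalar coefficient $\scp{r}{Ax}$ so that the adjoint appears explicitly: since $\scp{r}{Ax} = (A^{T}r)^{T}x = \scp{A^{T}r}{x}$, the random summand becomes $\scp{A^{T}r}{x}\,x$. The key observation — and essentially the whole content of the computation — is that multiplying the scalar inner product $\scp{A^{T}r}{x}$ by the vector $x$ is the same as applying the rank-one matrix $xx^{T}$ to the deterministic vector $A^{T}r$, namely
\begin{align*}
\scp{A^{T}r}{x}\,x = x\,\bigl(x^{T}A^{T}r\bigr) = \bigl(xx^{T}\bigr)A^{T}r.
\end{align*}

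Finally I would take expectations and use linearity to pull the deterministic vector $A^{T}r$ outside, obtaining $\EE\bigl((xx^{T})A^{T}r\bigr) = \EE(xx^{T})\,A^{T}r = I_{d}\,A^{T}r = A^{T}r$, which is exactly the gradient of $v \mapsto \tfrac12\norm{Av-b}^{2}$. I do not expect any genuine obstacle: the statement is a one-line consequence of the hypothesis once the scalar-times-vector term is recast as $(xx^{T})A^{T}r$. The only point deserving care is the bookkeeping of what is random and what is fixed — it is essential that $r$, and hence $A^{T}r$, are independent of $x$ so that they may leave the expectation — and no further integrability issue arises beyond the existence of the second moment $\EE(xx^{T})$, which is already implicit in the hypothesis. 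Note also that invoking $A^{T}$ in the proof does not conflict with the adjoint-free spirit of the method: the transpose is used only as a mathematical object, while the estimator $\scp{Av-b}{Ax}x$ itself requires only a forward evaluation of $A$.
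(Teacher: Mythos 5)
Your proposal is correct and follows essentially the same route as the paper: both rewrite the scalar-times-vector term $\scp{Av-b}{Ax}x$ as the rank-one matrix $xx^{T}$ applied to the fixed vector $A^{T}(Av-b)$, and then conclude by linearity of the expectation. Your additional remarks on what is random versus deterministic are sound but only make explicit what the paper's one-line proof leaves implicit.
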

\begin{proof}
  We rewrite $\scp{Av-b}{Ax}x = x\scp{Ax}{Av-b} = xx^{T}A^{T}(Av-b)$ and see that the result follows from linearity of the expectation.
\end{proof}

This observation motivates the use of the stochastic gradient method for the minimization of the least squares functional~(\ref{eq:least-squares}) which we state in Algorithm~\ref{alg:sgd}. We will analyze the convergence of this algorithm in Section~\ref{sec:sgd} and especially treat the question of suitable choices of the stepsizes $\tau_{k}$.

\begin{algorithm}
  \caption{Stochastic gradient descent with adjoint sampling (SGDAS)}
  \label{alg:sgd}
  \begin{algorithmic}
    \STATE Initialize $v^{0}=0\in\RR^{d}$, $k=0$
    \WHILE{not stopped}
    \STATE obtain random vector $x\in\RR^{d}$ with $\EE(xx^{T}) = I_{d}$
    \STATE update $v^{k+1} = v^{k} - \tau_{k}\scp{Av^{k}-b}{Ax}x$ for some stepsize $\tau_{k}$
    \STATE update $k\leftarrow k+1$
    \ENDWHILE 
  \end{algorithmic}
\end{algorithm}

One observes that the SGDAS method takes a step in a fairly random direction $x$ (it is a simple observation that the coefficient $\scp{Av^{k}-b}{Ax}$ turns this direction into a descent direction). This motivates to consider an even simpler method where we omit the coefficient and let the choice of the stepsize do the work to guarantee descent. This method is given in Algorithm~\ref{alg:rd}.

\begin{algorithm}
  \caption{Random descent (RD)}
  \label{alg:rd}
  \begin{algorithmic}
    \STATE Initialize $v^{0}=0\in\RR^{d}$, $k=0$
    \WHILE{not stopped}
    \STATE obtain random vector $x\in\RR^{d}$
    \STATE update $v^{k+1} = v^{k} + \tau_{k}x$ where the stepsize solves $\min\limits_{\tau}\tfrac12\norm{A(v^{k}+\tau x) - b}^{2}$
    \STATE update $k\leftarrow k+1$
    \ENDWHILE 
  \end{algorithmic}
\end{algorithm}

\subsection{Related work}
\label{sec:related-work}

Methods for the solution of linear least squares problems or linear equations that only need evaluations of $A$ are known as ``transpose-free methods'' and an early such method for the case of general square linear systems of equations has been proposed Freund in \cite{freund1993tfqmr} as a generalization of the quasi-minimal residual method named \emph{transpose free quasi-minimal residual method} (TFQMR).\footnote{Note that methods for \emph{symmetric} linear systems often do not make use of the transpose as in this case $Ax-b$ is already the gradient of the objective $x^{T}Ax - x^{T}b$.} Slightly later Brezinski and Redivo-Zaglia~\cite{brezinski1998transpose} and Chan et al.~\cite{chan1998transpose} proposed a method of Lanczos type which lead to the \emph{conjugate gradient squared method} (CGS) proposed by Sonneveld~\cite{sonneveld1989cgs}.

These mentioned methods are available in current software (e.g. in Scipy) and are deterministic. Early randomized methods, again for square linear systems, are based on the idea of reformulating the system $Av=b$ as $v = Gb + (I-GA)v$ for some square matrix $G$ and then consider the solution expressed by the Neumann series 
\begin{align*}
v = \sum\limits_{r=0}^{\infty}(I-GA)^{r}Gb,
\end{align*}
which converges to the solution if the spectral radius of $I-GA$ is smaller than one. Then an approximate solution is obtained by using a specific random walk to approximate the truncated series, see \cite{forsythe1950mc,curtiss1954mc,halton1994sequential,dimov2000mc,wang2008mc}.

Gradient based methods, however, are usually not applicable since the evaluation of the gradient of $\norm{Av-b}^{2}$ needs the evaluation of $A^{T}$. One exception are coordinate descent methods~\cite{leventhal2010randomized} which will be a special case of the random descent method. Another related method is the \emph{random search} method (see~\cite[Section 3.4]{polyak1987}). This is a derivative free method for the minimization of a convex objective $\Phi:\RR^{d}\to \RR$ and the iterates are 
\begin{align*}
v^{k+1} = v^{k} - \tfrac{\gamma_k}{\alpha_k}\left[ \Phi(v^{k}+\alpha_{k} u) - \Phi(v^{k}) \right]u
\end{align*}
for stepsizes $\gamma_{k}$ and discretization length $\alpha_{k}$ and where $u$ is sampled uniformly from the unit sphere. As discussed in~\cite{polyak1987}, this method converges if $\Phi$ has Lipschitz continuous gradient, $\gamma_{k}$ is small enough and $\alpha_{k}\to 0$. Applied to the objective $\Phi(v) = \tfrac12\norm{Av-b}^{2}$ we obtain 
\begin{align*}
  v^{k+1} & = v^k - \tfrac{\gamma_k}{\alpha_k}\left[ \tfrac12\norm{Av^{k}+\alpha_{k}Au - b}^{2} - \tfrac12\norm{Av^{k}-b}^{2} \right]u\\
  & = v^{k} - \gamma_{k}  \scp{Av^{k}-b}{Au}u - \tfrac{\gamma_k\alpha_{k}}2\norm{Au}^{2}u
\end{align*}
and if we set $\alpha_{k}=0$ in the final expression we obtain the update Algorithm~\ref{alg:sgd} (but with $u$ being uniformly distributed on the unit sphere). While convergence of the random search method is known, we are not aware of known rates for this method and, to the best of our knowledge, only the case of sampling from the unit sphere has been analyzed. An extension of random search with exact and inexact linesearch (similar to Algorithm~\ref{alg:rd}) has been studied in~\cite{stich2013optimization}. There, the authors obtain convergence rates in the strongly convex case for directions $x$ uniformly distributed on the unit sphere and random coordinate vectors. Later~\cite{nesterov2017gradient-free} studied a related method based on smoothing: For $\mu>0$ the authors consider 
\begin{align*}
f_{\mu}(x) = \EE_{u}(f(x+\mu u))
\end{align*}
which amounts to a convolution of $f$ with the scaled probability density of $u$ (in~\cite{nesterov2017gradient-free} the authors consider normally distributed search directions). It is observed that 
\begin{align*}
\nabla f_{\mu}(x) = \tfrac1\mu \EE((f(x+\mu u) - f(x))u)
\end{align*}
i.e. that the expectation of the random search step is a gradient step for a smoothed function. The authors derive convergence rates and also an accelerated method.

Our method can also be derived from the framework by Gower and Richtarik~\cite{gower2015randomized}: Their class of methods read as 
\begin{align}\label{eq:sketch-and-project}
v^{k+1} = v^{k} - B^{-1}A^{T}S(S^{T}AB^{-1}A^{T}S)^{\dag}S^{T}(Av^{k}-b)
\end{align}
where $S\in\RR^{l\times m}$ is a random sketching matrix and $B\in\RR^{d\times d}$ is a positive definite matrix. If $A$ has linearly independent columns one can set $B = A^{T}A$ and use $S=Ax\in \RR^{m}$ with $x\in\RR^{d}$ being a random vector one ends up with the random descent method.

Finally, note that SGDAS and RD are fundamentally different from the randomized Kaczmarz method~\cite{strohmer2009kaczmarz} (which can also be obtained from~\eqref{eq:sketch-and-project} by choosing $B=I_{d}$ and $S=e_{k}$ being a random coordinate vector). The iterates of the Kaczmarz iteration always operate in the range of $A^{T}$ while SGDAS and RD operate in the full space.

\subsection{Contribution}
\label{sec:contribution}

\begin{itemize}
\item 
  We derive transpose free methods that minimize the linear least squares functional which only use applications of $A$ (i.e. no applications of $A^{T}$ are done and no other data like knowledge of $\norm{A}$ is used). These methods can be seen as special cases of previously known random search methods, but we obtain more refined convergence results.
\item 
  We illustrate that these adjoint free methods can outperform established methods for least squares problems like TFQMR and CGS.
\item We do a detailed analysis of the random descent method (Algorithm~\ref{alg:rd}) for a class of distributions of the search direction and obtain results on the influence of the distribution of the search direction on the rate.
\item Without any assumptions on $A$ we show that the residual of the normal equations converges (in expectation) at a sublinear rate.
\item 
  We analyze the semi-convergence property of the method in the case of inconsistent problems and illustrate that random descent may be beneficial (in comparison to other iterative methods like the Landweber method) for the solution of linear ill-posed inverse problems when the solution is rough.
\end{itemize}

\section{Notation and basic results}
\label{sec:notation}

We use $\norm{x}$ to denote the Euclidean norm, write $\rho(M)$ for the spectral radius of a square matrix $M$, i.e. the maximum magnitude of the eigenvalues of $M$. With $\lambda_{\max}(M)$ and $\lambda_{\min}(M)$ we denote the largest and smallest eigenvalue of a real symmetric matrix $M$, respectively and $\sigma_{\min}(A)$ is the smallest positive singular value of a matrix $A$. The range of matrix $A$ is denoted by $\rg(A)$ and the kernel by $\ker(A)$. Moreover, $\EE$ denotes the expectation over all involved randomness (if not stated otherwise). With $A\mgeq B$ we denote the Loewner order on symmetric matrices, i.e. $A\mgeq B$ if $A-B$ is positive semi-definite.

\section{Convergence results}
\label{sec:convergence}

We start our convergence analysis of SGDAS and RD by stating the standing assumption on the random directions.

\begin{assumption}
\label{ass:distribution}
We assume that our random vector $x \in \RR^d$ is \emph{isotropic}, i.e. that it fulfills $\EE(xx^T) = I_{d}$. Moreover, we assume that  and $\EE(xx^{T}\norm{x}^{2}) = cI_{d}$ for some $c$.
\end{assumption}

By linearity of the expectation we get
\begin{align*}
\EE(\norm{x}^{2}) =\EE(x^{T}x) = \EE(\trace(xx^{T})) = \trace(\EE(xx^{T})) = d
\end{align*}
for random vectors satisfying Assumption~\ref{ass:distribution} (cf.~\cite[Lemma 3.2.4]{vershynin2018high}). The further assumption that $\EE(xx^{T}\norm{x}^{2})$ is a multiple of the identity is needed in the proofs below and is fulfilled in the list of sampling methods in the next example.

\begin{example}
  Examples for random vectors that fulfill Assumption~\ref{ass:distribution} are:
  \begin{enumerate}
  \item Vectors that are \textbf{uniformly distributed on the sphere} with radius $\sqrt{d}$, i.e. 
\begin{align*}
x\sim \Unif(\sqrt{d}\SS^{d-1}).
\end{align*}
Here we have $\norm{x}^{2}=d$ deterministically and hence $\EE(xx^{T}\norm{x}^{2}) = dI_{d}$ which means that $c=d$.
\item \textbf{Normally distributed} random vectors, i.e. 
\begin{align*}
x\sim \cN(0,I_{d}).
\end{align*}
In this case one calculates $c = \EE(xx^{T}\norm{x}^{2}) = d+2$.
  \item \textbf{Rademacher vectors}, i.e. vectors with entries which are $\pm1$, each with probability $1/2$ i.e. $\PP(x_{k}= \pm 1) = \tfrac12$, independently. Here we also have $\norm{x}^{2}=d$ deterministically and hence $c=d$ as well. 
  \item \textbf{Random coordinate vectors}, i.e. $x = \sqrt{d}e_{k}$ with $k$ being selected uniformly at random from $\left\{ 1,\dots,d \right\}$, i.e. $\PP(x=\sqrt{d}e_{k}) = \tfrac1d$. Since it holds $\norm{x}^{2}=d$ deterministically, it again holds that $c=d$ in this case.
  \end{enumerate}
\end{example}

\subsection{Stochastic gradient descent with adjoint sampling}
\label{sec:sgdas-convergence}

\subsubsection{Convergence of the iterates}
\label{sec:sgd}
In these paragraphs we are going to investigate convergence of the iterates of Algorithm~\ref{alg:sgd} similar to the analysis in~\cite{lorenz2018randomized}. For this to be reasonable we will need that a unique solution $\hat v$ of $Av=b$ exists, i.e. that $A^{T}A$ has full rank.


\begin{theorem}
\label{thm:taubound}
Let $Av=b$ be a consistent system with solution $\hat{v}$ and let the sequence $(v^k)_k$ be generated by Algorithm \ref{alg:sgd}, where the $x$ are sampled according to Assumption \ref{ass:distribution}.
Then it holds that 
\begin{align*}
    \EE(\norm{v^{k+1}-\hat{v}}^2) \leq \norm{v^k - \hat{v}}^2 - \tau_k \left(2-\tau_k c \norm{A}^2\right) \norm{A v^k - b}^2
\end{align*}
where the expectation is taken with respect to the random choice of $x$ is the $k$th step.  and hence is strictly decreasing and thus convergent if $0<\tau_k <2/(c\norm{A}^{2})$ for all $k$.
\end{theorem}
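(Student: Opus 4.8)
The plan is to expand the squared error $\norm{v^{k+1}-\hat v}^2$ directly from the update rule of Algorithm~\ref{alg:sgd} and take the conditional expectation over the single random vector $x$ drawn in step $k$. Throughout I would write $r^k=Av^k-b$ for the residual and use consistency, $A\hat v=b$, to record the identity $r^k=A(v^k-\hat v)$; this is what later lets me convert one of the inner products into $\norm{r^k}^2$.

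First I would subtract $\hat v$ from the update $v^{k+1}=v^k-\tau_k\scp{Av^k-b}{Ax}x$ and expand the square, obtaining
\begin{align*}
  \norm{v^{k+1}-\hat v}^2 = \norm{v^k-\hat v}^2 - 2\tau_k\scp{r^k}{Ax}\scp{v^k-\hat v}{x} + \tau_k^2\scp{r^k}{Ax}^2\norm{x}^2.
\end{align*}
The remainder of the argument is just evaluating the expectation of the two $x$-dependent terms separately using Assumption~\ref{ass:distribution}. For the cross term I would rewrite $\scp{r^k}{Ax}\scp{v^k-\hat v}{x}=(A^Tr^k)^Txx^T(v^k-\hat v)$, so that $\EE(xx^T)=I_d$ collapses it to $(A^Tr^k)^T(v^k-\hat v)=\scp{r^k}{A(v^k-\hat v)}=\norm{r^k}^2$. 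For the quadratic term I would write $\scp{r^k}{Ax}^2\norm{x}^2=(A^Tr^k)^T(xx^T\norm{x}^2)(A^Tr^k)$ and invoke the second-moment part of the assumption, $\EE(xx^T\norm{x}^2)=cI_d$, to get $c\norm{A^Tr^k}^2$; bounding $\norm{A^Tr^k}^2\le\norm{A}^2\norm{r^k}^2$ via $\norm{A^T}=\norm{A}$ and substituting both expectations yields exactly $\norm{v^k-\hat v}^2-\tau_k(2-\tau_k c\norm{A}^2)\norm{r^k}^2$ after factoring.

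For the final clause I would take total expectation through the tower property, so that $\EE(\norm{v^{k+1}-\hat v}^2)\le\EE(\norm{v^k-\hat v}^2)-\tau_k(2-\tau_k c\norm{A}^2)\EE(\norm{r^k}^2)$; when $0<\tau_k<2/(c\norm{A}^2)$ the factor $\tau_k(2-\tau_k c\norm{A}^2)$ is strictly positive, so the nonnegative sequence $\EE(\norm{v^k-\hat v}^2)$ is monotonically decreasing and therefore convergent.

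I do not expect a genuine obstacle here, since the argument is a direct second-moment computation. The only points requiring care are invoking the full strength of Assumption~\ref{ass:distribution}---in particular the isotropy of the \emph{weighted} second moment $\EE(xx^T\norm{x}^2)$ and not merely of $xx^T$, which is precisely why the constant $c$ appears---and observing that the stated inequality is conditional on $v^k$, so the claimed strict decrease refers to the unconditional expectation obtained after averaging over all earlier steps.
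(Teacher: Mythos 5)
Your proposal is correct and follows essentially the same argument as the paper: expand the squared error from the update rule, evaluate the cross term via $\EE(xx^T)=I_d$ and the quadratic term via $\EE(xx^T\norm{x}^2)=cI_d$, bound $\norm{A^T(Av^k-b)}\le\norm{A}\,\norm{Av^k-b}$, and use consistency to identify $A(v^k-\hat v)=Av^k-b$. Your closing remark on the tower property is a welcome clarification of a point the paper's statement leaves implicit, but it does not constitute a different approach.
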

\begin{proof}
    Defining $g^k$ as  $g^k = \scp{Av^k-b}{Ax}x$,
    the update step can be formulated as
    $v^{k+1} = v^k - \tau_k g^k$.
    Therefore, we get
    \begin{align*}
        \norm{v^{k+1} - \hat{v}}^2 &= \norm{v^k - \tau_k g^k - \hat{v}}^2 \\
        &= \norm{v^k - \hat{v}}^2 - 2 \tau_k \scp{v^k - \hat{v}}{g^k} + \tau_k^2 \norm{g^k}^2.
    \end{align*}
    The summands can be rewritten as
    \begin{align*}
        \scp{v^k-\hat{v}}{g^k} &= \scp{Av^k - b}{Ax} \cdot \scp{v^k-\hat{v}}{x} = \scp{Av^k-b}{Ax} \cdot \scp{x}{v^k-\hat{v}} \\
        &= (Av^k - b)^T A  x x^T  (v^k -\hat{v})
    \end{align*}
    and
    \begin{align*}
        \norm{g^k}^2 &= \scp{Av^k - b}{Ax}^2 \norm{x}^2 = \scp{Av^k - b}{Ax} \scp{Ax}{Av^k -b} \norm{x}^{2} \\
        &= (Av^k - b)^T Axx^T\norm{x}^{2} A^T(Av^k -b).
    \end{align*}
    Hence it follows from Assumption~\ref{ass:distribution} that the expectation with respect to $x$ is
    \begin{align*}
    \EE(\norm{v^{k+1}-\hat{v}}^2) &= \norm{v^k - \hat{v}}^2 - 2 \tau_k \EE( \scp{v^k - \hat{v}}{g^k}) + \tau_k^2 \EE(\norm{g^k}^2) \\
    &= \norm{v^k - \hat{v}}^2 - 2 \tau_k \scp{A(v^k-\hat{v})}{Av^k-b} + \tau_k^2 c \norm{A^T(Av^k -b)}^2 \\
    &\leq \norm{v^k - \hat{v}}^2 - 2 \tau_k \norm{Av^k - b}^2 + \tau_k^2 c \norm{A}^2 \norm{Av^k-b}^2 \\
    &= \norm{v^k - \hat{v}}^2 - \tau_k \left(2-\tau_k c \norm{A}^2\right) \norm{Av^k-b}^2.
    \end{align*}
  \end{proof}
  
\begin{theorem}
\label{thm:onestep}
Let $Av = b$ be a consistent system with solution $\hat v$ and let the sequence $(v^k)_k$ be generated by Algorithm \ref{alg:sgd} where the $x$ are sampled according to Assumption \ref{ass:distribution}, the $\tau_{k} = \tau$ are constant and fulfill
\begin{align*}
0 < \tau < \frac{2}{c \norm{A}^2}.
\end{align*}
If
\begin{align}\label{eq:lambda}
\lambda \vcentcolon = \rho\left(I_d -\tau A^T A (2I-\tau c A^T A)\right) \in [0,1)
\end{align}
is fulfilled, it holds that the iterates converge linearly with
\begin{align*}
\EE(\norm{v^{k+1}-\hat{v}}^2) \leq \lambda^{k+1} \norm{v^{0}-\hat{v}}^2.
\end{align*}
\end{theorem}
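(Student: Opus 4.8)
The plan is to reduce the linear convergence claim to a single-step quadratic-form contraction and then iterate it. The natural starting point is the exact one-step identity already established inside the proof of Theorem~\ref{thm:taubound}: taking the expectation only over the random $x$ drawn in step $k$ (i.e.\ conditioning on $v^k$) and using that $\tau_k=\tau$ is constant, one has
\[
\EE(\norm{v^{k+1}-\hat v}^2\mid v^k) = \norm{v^k-\hat v}^2 - 2\tau\,\scp{A(v^k-\hat v)}{Av^k-b} + \tau^2 c\,\norm{A^T(Av^k-b)}^2 .
\]
Since the system is consistent, $A\hat v = b$, and I can substitute $Av^k - b = A(v^k-\hat v)$ everywhere. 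Writing $e^k \vcentcolon= v^k - \hat v$, the three terms become $\norm{e^k}^2 = (e^k)^T e^k$, then $\scp{Ae^k}{Ae^k} = (e^k)^T A^T A\, e^k$, and finally $\norm{A^T A e^k}^2 = (e^k)^T (A^T A)^2 e^k$.

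First I would collect these into a single symmetric quadratic form, obtaining
\[
\EE(\norm{e^{k+1}}^2\mid v^k) = (e^k)^T\bigl(I_d - 2\tau A^T A + \tau^2 c (A^T A)^2\bigr)e^k = (e^k)^T M\, e^k,
\]
where $M = I_d - \tau A^T A(2I - \tau c A^T A)$ is exactly the matrix whose spectral radius defines $\lambda$ in~\eqref{eq:lambda}. Because $M$ is a polynomial in the symmetric matrix $A^T A$, it is itself symmetric and simultaneously diagonalizable with $A^T A$: to each eigenvalue $\mu\ge 0$ of $A^T A$ corresponds the eigenvalue $1 - 2\tau\mu + \tau^2 c\mu^2$ of $M$.

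The key step is the spectral bound. Viewed as a quadratic in $\mu$, the map $\mu \mapsto 1 - 2\tau\mu + \tau^2 c\mu^2$ has leading coefficient $\tau^2 c>0$ and discriminant $4\tau^2(1-c)$, which is nonpositive since $c\ge d\ge 1$; hence every eigenvalue of $M$ is nonnegative, $M\mgeq 0$, and therefore $\rho(M) = \lambda_{\max}(M) = \lambda$. Consequently $M\mleq \lambda I_d$, so the quadratic form obeys $(e^k)^T M e^k \le \lambda\,\norm{e^k}^2$, which yields the one-step contraction $\EE(\norm{e^{k+1}}^2\mid v^k) \le \lambda\,\norm{v^k-\hat v}^2$. (Note in passing that $\lambda\in[0,1)$ forces each eigenvalue $1-2\tau\mu+\tau^2 c\mu^2<1$, which rules out $\mu=0$ and thus silently encodes the full-rank assumption making $\hat v$ unique.)

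Finally I would pass from the conditional to the total expectation by the tower property and iterate: taking expectations over all randomness up to step $k$ gives $\EE(\norm{v^{k+1}-\hat v}^2) \le \lambda\,\EE(\norm{v^k-\hat v}^2)$, and unrolling this recursion from the deterministic initial value $v^0$ produces $\EE(\norm{v^{k+1}-\hat v}^2) \le \lambda^{k+1}\norm{v^0-\hat v}^2$. I expect the only real obstacle to be the spectral bookkeeping — specifically making rigorous that it is $\rho(M)$, and not merely $\lambda_{\max}(M)$, that controls the quadratic form, which hinges precisely on the nonnegativity of the eigenvalues of $M$ obtained from $c\ge 1$; everything else is a direct reuse of the identity in Theorem~\ref{thm:taubound} followed by a routine induction.
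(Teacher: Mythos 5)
Your proposal is correct and follows essentially the same route as the paper: the one-step expectation identity already derived in Theorem~\ref{thm:taubound}, consistency to rewrite everything as a quadratic form in $M = I_d - \tau A^TA(2I-\tau c A^TA)$, the spectral bound $\scp{e^k}{Me^k}\leq \lambda\norm{e^k}^2$, and iteration via the law of total expectation. The only addition, your discriminant argument showing $M\mgeq 0$, is harmless but not actually needed: for a symmetric matrix one always has $\lambda_{\max}(M)\leq\rho(M)$, so the quadratic form is controlled by $\rho(M)$ regardless of the sign of the eigenvalues.
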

\begin{proof}
As in the proof of Theorem~\ref{thm:taubound} we get
\begin{align*}
\EE(\norm{v^{k+1} - \hat{v}}^2) =& \norm{v^{k} - \hat{v}}^2 - 2 \tau \EE\left((v^{k}-\hat{v})^T A^T A x x^T (v^{k} - \hat{v}) \right) \\ &+ \tau^2 \EE\left((v^{k}-\hat{v})^T A^T A x x^T \norm{x}^{2} A^T A (v^{k} - \hat{v}) \right) \\
=& \norm{v^{k} - \hat{v}}^2 - 2 \tau \norm{A(v^{k} - \hat{v})}^2 + \tau^2 c \norm{A^TA(v^{k} - \hat{v})}^2 \\
=& \scp{v^{k}- \hat{v}}{(I- \tau A^T A (2 I - \tau c A^T A)) (v^{k} - \hat{v})} \\
\leq& \lambda \norm{v^{k} - \hat{v}}^2.
\end{align*}
Iterating this result and using the law of total expectation gives the convergence result.
\end{proof}

\begin{remark}\label{rem:opt-stepsize}
   The matrices $A^{T}A$ and $I-\tau A^T A (2I-\tau c A^T A)$ (in the argument of $\rho$ in the definition of $\lambda$) share the eigenvectors and it holds that if $\mu$ is an eigenvalue of $A^{T}A$,then $1-  \tau \mu (2 - \tau c \mu)$ is an eigenvalue of $I-\tau A^T A (2I-\tau c A^T A)$.  
  Convergence of $\EE(\norm{v^{k}-\hat{v}^{2}})$ happens when $\lambda<1$ holds and for this to hold it is necessary that $A^{T}A$ has full rank and that $\tau$ fulfills the bounds stated in Theorem~\ref{thm:onestep}.
  
  For fastest convergence, one minimizes $\lambda$ with respect to the stepsize $\tau$.
  The spectral radius $\lambda = \rho\left(I_d -\tau A^T A (2I-\tau c A^T A)\right)$ is the maximum of the values
$1-  \tau \mu (2 - \tau c \mu)$ where $\mu$ ranges over the eigenvalues of $A^{T}A$. Since this is a quadratic polynomial in $\mu$ with a positive leading term, 
the function $\Theta_{\tau}(\mu) \vcentcolon=  1-  \tau \mu (2 - \tau c \mu)$ is convex and thus, its maxima are at the endpoints of evaluation. Hence, $\lambda$ is given by
\begin{align*}
\lambda = \max\left\{\Theta_{\tau}(\lambda_{\min}(A^TA)), \Theta_{\tau}(\lambda_{\max}(A^TA))\right\}.
\end{align*}
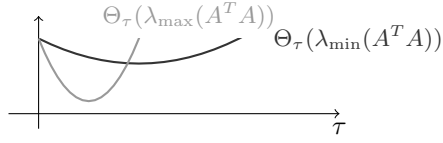
\begin{figure}[htb]
  \centering
  \begin{tikzpicture}[xscale=2]
    \draw[->] (-0.2,0) -- (2,0) node[below]{$\tau$};
    \draw[->] (0,-0.2) -- (0,1.3);

    \begin{scope}
      \clip (-0.5,-0.5) rectangle (3,1);
      \draw[domain=0:2,thick,black!80] plot (\x,{1-\x*0.5*(2-\x*0.5*3)});
      \draw[domain=0:1,thick,black!40] plot (\x,{1-\x*2.5*(2-\x*1*3)});
    \end{scope}
    \node[black!80] at (1.5,1)[right]{\small $\Theta_{\tau}(\lambda_{\min}(A^{T}A))$};
    \node[black!40] at (1,1)[above]{\small $\Theta_{\tau}(\lambda_{\max}(A^{T}A))$};
  \end{tikzpicture}
  \caption{How to find the optimal stepsize? Illustration for the derivation in Remark~\ref{rem:opt-stepsize}.}
  \label{fig:opt-stepsize}
\end{figure}
To determine the optimal stepsize $\tau$, we minimize $\lambda$ over $\tau$, i.e. we have to minimize $\max\left\{\Theta_{\tau}(\lambda_{\min}(A^TA)), \Theta_{\tau}(\lambda_{\max}(A^TA))\right\}$ over $\tau$. As can be seen in Figure~\ref{fig:opt-stepsize}, the minimum is where $\Theta_{\tau}(\lambda_{\min}(A^TA)) = \Theta_{\tau}(\lambda_{\max}(A^TA))$, and this results in
\begin{align*}
1- \tau \lambda_{\min}(A^TA) (2-\tau c \lambda_{\min}(A^TA)) &= 1- \tau \lambda_{\max}(A^TA) (2-\tau c \lambda_{\max}(A^TA)) \\
\Leftrightarrow \lambda_{\min}(A^TA) (2-\tau c \lambda_{\min}(A^TA)) &= \lambda_{\max}(A^TA) (2-\tau c \lambda_{\max}(A^TA)) 
\end{align*}
Hence, we get as optimal stepsize
\begin{align*}
  \tau &= \frac{2}{c} \frac{\lambda_{\max}(A^TA) - \lambda_{\min}(A^TA)}{\lambda_{\max}(A^TA)^2 - \lambda_{\min}(A^TA)^2} = \frac{2}{c} \frac{1}{\lambda_{\max}(A^TA) + \lambda_{\min}(A^TA)}
\end{align*}
Plugging this in we get that the optimal convergence rate is 
\begin{align*}
  \lambda = 1 - \frac4c \frac{\lambda_{\max}(A^{T}A)\lambda_{\min}(A^TA)}{(\lambda_{\max}(A^{T}A) + \lambda_{\min}(A^TA))^{2}}.
\end{align*}
With $\kappa(A) = \lambda_{\max}(A^TA)/\lambda_{\min}(A^TA)$ this is 
\begin{align*}
\lambda = 1 - \frac4c \frac{\kappa(A)}{(\kappa(A)+1)^{2}}.
\end{align*}
\end{remark}

Now we consider the inconsistent case, in which we do not assume the existence of a solution. We model this as an additive error and assume that the right hand side is $b+r$ with $b \in \rg(A)$.

\begin{theorem}\label{thm:inconsistent-estimate}
Let $\hat{v}$ fulfill $A \hat{v} = b$ and let the sequence $(v^k)_k$ be generated by Algorithm \ref{alg:sgd} with $x$ sampled according to Assumption \ref{ass:distribution}, constant stepsize $\tau$ which fulfills
\begin{align*}
0 < \tau < \frac{2}{c \norm{A}^2},
\end{align*}
and where the right hand side is changed to $b+r$ with $b \in \rg(A)$ and $r = r' + r''$ with $r' \in \rg(A)$ and $r'' \in \rg(A)^\bot$. 
Then with $\lambda$ from \eqref{eq:lambda} it holds that
\begin{align*}
\EE(\norm{v^{k+1}-\hat{v}}^2) \leq \left( \frac{1+\lambda}{2}\right)^{k+1} \norm{v^{0}-\hat{v}}^2 + \tau^2 \frac{2 \left( (1-\lambda) c + 2 \norm{I-\tau c A^T A}^2 \right)}{(1- \lambda)^2}  \norm{A^T r'}^2
\end{align*}
\end{theorem}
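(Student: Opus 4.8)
The plan is to mirror the one-step computation in the proof of Theorem~\ref{thm:onestep}, but now carrying the perturbation $r$ along. Writing $e^k = v^k - \hat v$ and observing that the update now uses $Av^k - (b+r) = Ae^k - r$, I would first expand
\begin{align*}
\norm{e^{k+1}}^2 = \norm{e^k}^2 - 2\tau\scp{e^k}{g^k} + \tau^2\norm{g^k}^2,
\qquad g^k = \scp{Ae^k - r}{Ax}x,
\end{align*}
and take the expectation over $x$ exactly as before. Using $\EE(xx^T)=I_d$, the linear term gives $\EE(\scp{e^k}{g^k}) = \norm{Ae^k}^2 - \scp{r}{Ae^k}$, and using $\EE(xx^T\norm{x}^2)=cI_d$ the quadratic term gives $\EE(\norm{g^k}^2) = c\norm{A^TAe^k - A^Tr}^2$.

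The key simplification is the orthogonal splitting $r = r' + r''$: since $r''\in\rg(A)^\bot = \ker(A^T)$ we have $A^Tr = A^Tr'$, and since $Ae^k\in\rg(A)$ we have $\scp{r}{Ae^k} = \scp{r'}{Ae^k}$. Hence every occurrence of $r$ may be replaced by $r'$. Collecting terms, I would group the contributions that do not involve $r'$ into the matrix $M := I - \tau A^TA(2I - \tau c A^TA)$, for which $\scp{e^k}{Me^k}\le\lambda\norm{e^k}^2$ exactly as in Theorem~\ref{thm:onestep}. The surviving terms form an affine perturbation: a cross term linear in $e^k$, which after using the symmetry of $A^TA$ and factoring equals $2\tau\scp{e^k}{(I-\tau c A^TA)A^Tr'}$, plus the constant $\tau^2 c\norm{A^Tr'}^2$.

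The main obstacle is this linear cross term, which vanishes in the consistent case but must now be controlled so as not to destroy contraction. I would apply Young's inequality $2\tau\scp{e^k}{w}\le \epsilon\norm{e^k}^2 + \tfrac{\tau^2}{\epsilon}\norm{w}^2$ with $w = (I-\tau c A^TA)A^Tr'$ and the tuned choice $\epsilon = (1-\lambda)/2$, so that the effective contraction factor becomes $\lambda + \epsilon = (1+\lambda)/2$, which lies in $[0,1)$ whenever $\lambda\in[0,1)$. Bounding $\norm{w}^2\le\norm{I-\tau c A^TA}^2\norm{A^Tr'}^2$ then yields the affine recursion
\begin{align*}
\EE(\norm{e^{k+1}}^2)\le \tfrac{1+\lambda}{2}\,\EE(\norm{e^k}^2) + \tau^2\norm{A^Tr'}^2\Bigl(\tfrac{2\norm{I-\tau c A^TA}^2}{1-\lambda} + c\Bigr),
\end{align*}
where the law of total expectation absorbs the randomness accumulated over the steps.

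The final step is to unroll this recursion. With $q := (1+\lambda)/2$ and $C$ the constant above, iterating gives $\EE(\norm{e^{k+1}}^2)\le q^{k+1}\norm{e^0}^2 + C\sum_{j=0}^{k} q^j \le q^{k+1}\norm{e^0}^2 + C/(1-q)$. Using $1-q = (1-\lambda)/2$ to evaluate the geometric tail and simplifying $\tfrac{2}{1-\lambda}\bigl(\tfrac{2\norm{I-\tau c A^TA}^2}{1-\lambda}+c\bigr)$ over a common denominator reproduces the stated constant $\tfrac{2((1-\lambda)c + 2\norm{I-\tau c A^TA}^2)}{(1-\lambda)^2}$, which completes the estimate with $e^0 = v^0-\hat v$. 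The only genuinely delicate point is the tuning of $\epsilon$: any admissible value would still yield linear convergence to a residual neighborhood, but this particular choice is what produces the clean factor $(1+\lambda)/2$ and the constant as stated.
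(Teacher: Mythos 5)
Your proposal is correct and follows essentially the same route as the paper's proof: the identical decomposition into the contraction term $\scp{e^k}{(I-\tau A^TA(2I-\tau c A^TA))e^k}$, the cross term $2\tau\scp{e^k}{(I-\tau c A^TA)A^Tr}$, and the constant $\tau^2 c\norm{A^Tr}^2$, followed by Young's inequality with the same effective parameter (your $\epsilon=(1-\lambda)/2$ equals the paper's $1/(2\varepsilon)$ with $\varepsilon=1/(1-\lambda)$) and the same geometric-series unrolling. The only cosmetic difference is that you invoke $A^Tr=A^Tr'$ at the start, whereas the paper carries $r$ through and discards $r''$ only in the final line.
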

\begin{proof}
Following the proof of Theorem \ref{thm:onestep}, we observe
\begin{align*}
\norm{v^{k+1} - \hat{v}}^2 =& \norm{v^{k} - \tau \scp{A v^{k} - (b+r)}{Ax} x - \hat v}^2 \\
  =& \norm{v^{k} - \hat{v}}^2 - 2 \tau \left( \scp{A(v^{k}-\hat{v})}{Ax} - \scp{r}{Ax}\right) \cdot \scp{v^{k}-\hat{v}}{x} \\ &+ \tau^2 \scp{A(v^{k}-\hat{v})-r}{Ax}^2 \norm{x}^2
\end{align*}
As in Theorem \ref{thm:onestep} taking the expected value with respect to $x$ and using Young's inequality with $\varepsilon>0$ results in
\begin{align*}
  \EE(\norm{v^{k+1} - \hat{v}}^2) =& \norm{v^{k} - \hat{v}}^2 - 2 \tau \EE\left((v^{k}-\hat{v})^T A^T A x x^T (v^{k} - \hat{v}) \right) \\ &+ \tau^2 \EE\left(\left((v^{k}-\hat{v})^T A^T\right) A x x^T \norm{x}^{2} A^T \left(A (v^{k} - \hat{v})\right) \right)\\
                                   &+ 2 \tau \EE(r^T Ax x^T (v^k - \hat{v})) - 2  \tau^2 \EE(r^T Ax x^T \norm{x}^{2} A^T A(v^k-\hat{v})) \\
                                   &+ \tau^2 \EE(r^T Axx^T\norm{x}^{2} A^T r) \\
  =& \norm{v^{k} - \hat{v}}^2 - 2 \tau \norm{A(v^{k} - \hat{v})}^2 + \tau^2 c \norm{A^TA(v^{k} - \hat{v})}^2 \\
                                   &+ 2 \tau \scp{A^T r}{v^k - \hat{v}} - 2 \tau^2 c \scp{A^T r}{A^T A(v^k-\hat{v})} + \tau^2 c \norm{A^T r}^2\\
  =&  \scp{v^{k}- \hat{v}}{(I- \tau A^T A (2 I - \tau c A^T A)) (v^{k} - \hat{v})} \\ 
                                   &+ \scp{v^k-\hat{v}}{2 \tau (I -  \tau c A^T A) A^T r} + \tau^2 c \norm{A^T r}^2 \\
  \leq& \left(\lambda +  \frac{1}{2 \varepsilon}\right) \norm{v^{k} - \hat{v}}^2+ \tau^2 \underbrace{ \left(  c + 2 \varepsilon \norm{I-\tau c
     A^T A}^2 \right)}_{=: \Lambda} \norm{A^T r}^2  ,
\end{align*}
Now we choose $\varepsilon = \frac{1}{1-\lambda}$, which yields by the law of total expectation
\begin{align*}
\EE(\norm{v^{k} - \hat{v}}^2) &\leq \left(\frac{1+\lambda}{2}\right)^k \norm{v^{0} - \hat{v}}^2 + \tau^2 \sum_{j=0}^{k-1} \left(\frac{1+\lambda}{2}\right)^j \Lambda \norm{A^T r}^2 \\
&= \left(\frac{1+\lambda}{2}\right)^k \norm{v^{0} - \hat{v}}^2 + \tau^2 \Lambda \norm{A^T r} \sum_{j=0}^{k-1} \left(\frac{1+\lambda}{2}\right)^j \\
&\leq \left(\frac{1+\lambda}{2}\right)^k \norm{v^{0}-\hat{v}}^2 + \tau^2 \frac{2 \Lambda}{1- \lambda}  \norm{A^T r}^2,
\end{align*}
where the last inequality is due to the geometric series. 
Since $\rg(A)^\bot = \ker(A^T)$, we attain $A^Tr'' = 0$ and hence the result.
\end{proof}

In Theorem~\ref{thm:inconsistent-estimate} we see that we can estimate the error with two terms. The first one decays to zero exponentially, but the second is a constant offset which is due to the noise in the right hand side. This offset only depends on the part of the noise that is the range of $A$. We remark that the offset is not meaningful in the case where $\lambda=1$ occurs (which would also happen in the case of an ill-posed compact operator between Hilbert spaces---a situation which is not considered in this paper).

\subsubsection{Convergence of the residual}
\label{sec:residual}

In the results in the previous section we needed $\lambda_{\min}(A^{T}A)>0$ to get that $\lambda <1$ and hence, the results are not useful for problems where $A^{T}A$ is rank deficient, i.e. especially not in the case of underdetermined systems. In this case we do not have unique solutions of $Av=b$, but a nontrivial solution subspace (in the case of a consistent system).

We first show convergence properties of the residual. The following result holds for any linear system $Av=b$.
\begin{theorem}
\label{thm:rangeconvergence}
Let $Av=b$ be a linear system and let the sequence $(v^k)_k$ be generated by Algorithm \ref{alg:sgd}, where the $x$ are sampled according to Assumption \ref{ass:distribution} and the $\tau_k$ fulfill
\begin{align*}
    0 < \tau_k < \frac{2}{c \norm{A}^2}.
\end{align*}
Then the residual fulfills
\begin{align*}
    \EE(\norm{Av^{k+1}-b}^2) \leq \norm{Av^k - b}^2 - \tau_k \left(2-\tau_k c \norm{A}^2\right) \norm{A^T(A v^k - b)}^2
\end{align*}
where the expectation is taken with respect to the random choice of $x$ in the $k$th step and hence is strictly decreasing and thus convergent.
\end{theorem}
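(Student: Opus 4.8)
The plan is to mimic the one-step recursion established in Theorem~\ref{thm:taubound}, but measure progress in the residual $\norm{Av^k-b}^2$ rather than in the distance $\norm{v^k-\hat v}^2$ to a solution. The crucial gain is that no solution $\hat v$ need exist, so the argument applies to arbitrary (possibly inconsistent, possibly rank-deficient) systems. First I would apply $A$ to the update rule. Writing $g^k = \scp{Av^k-b}{Ax}x$ as in the earlier proof, the iterate satisfies $v^{k+1}=v^k-\tau_k g^k$, hence
\begin{align*}
Av^{k+1}-b = (Av^k-b) - \tau_k A g^k = (Av^k-b) - \tau_k \scp{Av^k-b}{Ax}Ax.
\end{align*}
Expanding the squared norm gives
\begin{align*}
\norm{Av^{k+1}-b}^2 = \norm{Av^k-b}^2 - 2\tau_k \scp{Av^k-b}{Ax}\scp{Av^k-b}{Ax} + \tau_k^2 \scp{Av^k-b}{Ax}^2\norm{Ax}^2,
\end{align*}
where I used $\scp{Av^k-b}{Ag^k} = \scp{Av^k-b}{Ax}\scp{Ax}{Av^k-b} = \scp{Av^k-b}{Ax}^2$.

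Next I would take the expectation with respect to $x$ in the $k$th step and invoke Assumption~\ref{ass:distribution}. The linear (middle) term becomes $(Av^k-b)^T A\,\EE(xx^T)\,A^T(Av^k-b) = \norm{A^T(Av^k-b)}^2$, using $\EE(xx^T)=I_d$. For the quadratic term I rewrite $\scp{Av^k-b}{Ax}^2\norm{Ax}^2 = (Av^k-b)^T A x x^T \norm{x}^2 A^T(Av^k-b)$; this requires replacing $\norm{Ax}^2$ by $\norm{x}^2$ inside the expectation, which is not valid pointwise but is exactly where the bound $\norm{Ax}^2\le\norm{A}^2\norm{x}^2$ enters. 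Applying this bound and then $\EE(xx^T\norm{x}^2)=cI_d$ yields
\begin{align*}
\EE(\scp{Av^k-b}{Ax}^2\norm{Ax}^2) \le \norm{A}^2\,(Av^k-b)^T A\,\EE(xx^T\norm{x}^2)\,A^T(Av^k-b) = c\norm{A}^2\norm{A^T(Av^k-b)}^2.
\end{align*}
Combining the two contributions gives
\begin{align*}
\EE(\norm{Av^{k+1}-b}^2) \le \norm{Av^k-b}^2 - \tau_k\bigl(2-\tau_k c\norm{A}^2\bigr)\norm{A^T(Av^k-b)}^2,
\end{align*}
which is the asserted inequality.

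The step I expect to need the most care is the quadratic term: the factor $\norm{Ax}^2$ depends on $x$ in the same way as the outer product $xx^T$, so it cannot simply be pulled out of the expectation, and the clean identity $\EE(xx^T\norm{x}^2)=cI_d$ only helps after the norm-of-the-map estimate $\norm{Ax}^2\le\norm{A}^2\norm{x}^2$ has decoupled the operator from the direction. Once this bound is inserted, the remaining expectation is the one supplied by Assumption~\ref{ass:distribution}, exactly as in Theorem~\ref{thm:taubound}. Finally, monotonicity and convergence follow immediately from the stepsize constraint $0<\tau_k<2/(c\norm{A}^2)$, which forces $\tau_k(2-\tau_k c\norm{A}^2)>0$, so the sequence $\EE(\norm{Av^k-b}^2)$ is nonincreasing and bounded below by zero, hence convergent.
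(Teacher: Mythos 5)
Your proposal is correct and follows essentially the same route as the paper's proof: expand the residual recursion, use isotropy $\EE(xx^T)=I_d$ for the cross term, apply the pointwise bound $\norm{Ax}^2\le\norm{A}^2\norm{x}^2$ (equivalently $\norm{Ag^k}\le\norm{A}\norm{g^k}$) to the quadratic term, and finish with $\EE(xx^T\norm{x}^2)=cI_d$. The paper writes the quadratic term as $\EE(\norm{Axx^TA^T(Av^k-b)}^2)$ before bounding it, but this is algebraically the same quantity and the same estimate, so there is no substantive difference.
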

\begin{proof}
    Taking the expected value of
    \begin{align*}
        \norm{Av^{k+1}-b}^2 =& \norm{Av^{k} - b}^2 - 2 \tau_k \scp{Av^{k}-b}{A g^k} + \tau_k^2 \norm{Ag^k}^2 \\
        =& \norm{Av^{k} - b}^2 - 2 \tau_k (Av^k-b)^T A xx^T A^T(Av^k-b) \\
        &+ \tau_k^2 (Av^k-b)^T A x x^T A^T A x x^T A^T (Av^k -b)
    \end{align*}
    with respect to $x$ and using Assumption~\ref{ass:distribution} results in
    \begin{align*}
        \EE(\norm{Av^{k+1}-b}^2) & =\norm{Av^k-b}^2 - 2 \tau_k \norm{A^T(Av^k-b)}^2 \\
                                  &\qquad+ \tau_k^2 \EE(\norm{Axx^T A^T(Av^k-b)}^2) \\
                                  &\leq  \norm{Av^k-b}^2 - 2 \tau_k \norm{A^T(Av^k-b)}^2 \\
        &\qquad + \tau_k^2 \norm{A}^{2}\EE(\norm{xx^T A^T(Av^k-b)}^2) \\
        &\leq  \norm{Av^k-b}^2 - 2 \tau_k \norm{A^T(Av^k-b)}^2 \\ & \qquad + \tau_k^2  \norm{A}^{2}\EE((Av^{k}-b)^{T}Axx^{T}xx^{T}A^{T}(Av^k-b))\\
        &\leq  \norm{Av^k-b}^2 - 2 \tau_k \norm{A^T(Av^k-b)}^2 \\ &\qquad + \tau_k^2  \norm{A}^2c \norm{A^T(Av^k-b)}^2.
    \end{align*}
    This leads to
    \begin{align*}
        \EE(\norm{Av^{k+1}-b}^2) \leq \norm{Av^k-b}^2 - \tau_k \left(2 - \tau_k c \norm{A}^2\right) \norm{A^T(Av^k-b)}^2,
    \end{align*}
    which leads to the results.
\end{proof}

Without any assumption on the shape of $A$, rank or consistency we always get sublinear convergence of the residual of the normal equations.
\begin{theorem}\label{thm:sublinear-ls-residual}
    Let the sequence $(v^k)_k$ be generated by Algorithm \ref{alg:sgd} with constant step size
    \begin{align*}
        \tau_k = \tau \vcentcolon = \frac{1}{c \norm{A}^2},
    \end{align*} 
    where the $x$ are sampled according to Assumption \ref{ass:distribution}. Then it holds that
    \begin{align*}
        \min_{0 \leq k \leq N-1} \EE(\norm{A^T(Av^k-b)}^2) \leq \frac{c \norm{A}^2 \norm{b}^2}{N}.
    \end{align*}
\end{theorem}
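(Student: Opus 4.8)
The plan is to build directly on the one-step residual decrease from Theorem~\ref{thm:rangeconvergence} and turn it into a telescoping sum. First I would substitute the constant stepsize $\tau_k = \tau = 1/(c\norm{A}^2)$ into the bound of that theorem. With this choice one has $\tau c\norm{A}^2 = 1$, so the factor multiplying the residual simplifies to
\begin{align*}
\tau\left(2 - \tau c\norm{A}^2\right) = \tau(2-1) = \tau = \frac{1}{c\norm{A}^2},
\end{align*}
and, after taking full expectation via the law of total expectation, Theorem~\ref{thm:rangeconvergence} reads
\begin{align*}
\EE(\norm{Av^{k+1}-b}^2) \le \EE(\norm{Av^{k}-b}^2) - \frac{1}{c\norm{A}^2}\EE(\norm{A^T(Av^k-b)}^2).
\end{align*}

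Next I would rearrange this into a bound on the expected normal-equation residual and sum over $k$ from $0$ to $N-1$:
\begin{align*}
\frac{1}{c\norm{A}^2}\sum_{k=0}^{N-1}\EE(\norm{A^T(Av^k-b)}^2) \le \sum_{k=0}^{N-1}\left(\EE(\norm{Av^k-b}^2) - \EE(\norm{Av^{k+1}-b}^2)\right).
\end{align*}
The right-hand side telescopes to $\norm{Av^0-b}^2 - \EE(\norm{Av^N-b}^2)$, and dropping the nonnegative subtracted term bounds it by $\norm{Av^0-b}^2$. Since the algorithm initializes $v^0 = 0$, we have $Av^0 - b = -b$ and hence $\norm{Av^0-b}^2 = \norm{b}^2$, giving
\begin{align*}
\sum_{k=0}^{N-1}\EE(\norm{A^T(Av^k-b)}^2) \le c\norm{A}^2\norm{b}^2.
\end{align*}

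Finally I would use that the minimum of a finite family of nonnegative numbers is at most their average:
\begin{align*}
\min_{0\le k\le N-1}\EE(\norm{A^T(Av^k-b)}^2) \le \frac{1}{N}\sum_{k=0}^{N-1}\EE(\norm{A^T(Av^k-b)}^2) \le \frac{c\norm{A}^2\norm{b}^2}{N},
\end{align*}
which is exactly the claimed bound.

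I do not expect any genuine obstacle here: the argument is the standard sublinear-rate device of summing a sufficient-decrease inequality and bounding the minimum by the average. The only points requiring a little care are the passage from the conditional per-step expectation in Theorem~\ref{thm:rangeconvergence} to the full expectation, which is handled by the law of total expectation, and the observation that the chosen stepsize exactly maximizes the per-step coefficient $\tau(2-\tau c\norm{A}^2)$ over $\tau$. This maximization makes the constant in the final bound as small as possible and is the reason this particular value of $\tau$ is singled out in the statement.
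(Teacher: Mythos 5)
Your proposal is correct and follows essentially the same route as the paper's own proof: both substitute the stepsize $\tau = 1/(c\norm{A}^2)$ into the one-step decrease of Theorem~\ref{thm:rangeconvergence}, pass to full expectations via the law of total expectation, telescope the sum, use $v^0=0$ to get $\norm{b}^2$, and bound the minimum by the average. Your added remark that this $\tau$ maximizes the coefficient $\tau(2-\tau c\norm{A}^2)$ is a correct and pertinent observation that the paper leaves implicit.
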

\begin{proof}
    Under the given conditions, Theorem \ref{thm:rangeconvergence} and the law of total expectation provide
    \begin{align*}
        \EE(\norm{A^T(Av^k-b)}^2) \leq c \norm{A}^2 \left(\EE(\norm{Av^k-b}^2) - \EE(\norm{Av^{k+1}-b}^2)\right).
    \end{align*}
    Hence summing up yields to
    \begin{align*}
        N \cdot \min_{0 \leq t \leq N-1} \EE(\norm{A^T(Av^k-b)}^2) &\leq \sum_{k=0}^{N-1} \EE(\norm{A^T(Av^k-b)}^2) \\
        &\leq c \norm{A}^2 \left( \norm{Av^0-b}^2 - \EE(\norm{Av^{N}-b}^2)\right)
    \end{align*}
    Since $v^0 = 0$, this becomes
    \begin{align*}
    N \cdot \min_{0 \leq t \leq N-1} \EE(\norm{A^T(Av^k-b)}^2) &\leq c \norm{A}^2 \norm{b}^2
    \end{align*}
    and thus the result is proven.
\end{proof}

The following Theorem shows expected linear convergence of the residual for the iterates generated by Algorithm \ref{alg:sgd}.
\begin{theorem}
\label{thm:linearconvergenceresidual}
Let $Av = b$ be a linear system and let the sequence $(v^k)_k$ be generated by Algorithm \ref{alg:sgd} with constant stepsize
\begin{align*}
0 < \tau < \frac{2}{c \norm{A}^2},
\end{align*}
where the $x$ are sampled according to Assumption \ref{ass:distribution}. 

Then it holds for the expectation taken with respect to the random choice of $x$ in the $k$th step that
\begin{align*}
\EE(\norm{Av^{k+1}-b}^2) \leq \beta \norm{Av^{k}-b}^2.
\end{align*}
with
\begin{align*}
\beta \vcentcolon = 1-\tau\sigma_{\min}(A)^{2} (2 - \tau c \norm{A}^2).
\end{align*}
If $\sigma_{\min}(A)>0$ we have $0<\beta<1$ 
and hence we have linear convergence
\begin{align*}
\EE(\norm{Av^{k+1}-b}^2) \leq \beta^{k+1} \norm{Av^{0}-b}^2.
\end{align*}
\end{theorem}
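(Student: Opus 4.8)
The plan is to promote the one-step residual estimate of Theorem~\ref{thm:rangeconvergence} to a linear contraction by bounding the normal-equation residual $\norm{A^T(Av^k-b)}^2$ from below by a multiple of the residual $\norm{Av^k-b}^2$. With the constant stepsize $\tau$, Theorem~\ref{thm:rangeconvergence} already provides
\begin{align*}
\EE(\norm{Av^{k+1}-b}^2)\le \norm{Av^k-b}^2-\tau(2-\tau c\norm{A}^2)\norm{A^T(Av^k-b)}^2,
\end{align*}
so the entire argument reduces to a spectral bound of the form $\norm{A^Tw}^2\ge\sigma_{\min}(A)^2\norm{w}^2$ applied to the residual $w=Av^k-b$.

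First I would establish this bound. Using a singular value decomposition $A=U\Sigma V^T$, any $w\in\rg(A)$ can be expanded in the left singular vectors belonging to the nonzero singular values $\sigma_i$, and $\norm{A^Tw}^2$ then equals the sum of the $\sigma_i^2$ weighted by the squared expansion coefficients; bounding each $\sigma_i^2$ below by $\sigma_{\min}(A)^2$ gives $\norm{A^Tw}^2\ge\sigma_{\min}(A)^2\norm{w}^2$. The delicate point is that this estimate requires $w\in\rg(A)$, so I must verify that the residual actually lies in $\rg(A)$. Since $Av^k\in\rg(A)$ for every $k$ and, by consistency of $Av=b$, also $b\in\rg(A)$, we indeed have $Av^k-b\in\rg(A)$ and the bound applies.

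Substituting the spectral bound into the one-step estimate yields
\begin{align*}
\EE(\norm{Av^{k+1}-b}^2)\le(1-\tau\sigma_{\min}(A)^2(2-\tau c\norm{A}^2))\norm{Av^k-b}^2=\beta\norm{Av^k-b}^2.
\end{align*}
It then remains to check $0<\beta<1$ when $\sigma_{\min}(A)>0$. The bound $\beta<1$ is immediate because $\tau>0$, $\sigma_{\min}(A)^2>0$ and $2-\tau c\norm{A}^2>0$ by the stepsize restriction, so the subtracted term is strictly positive. For $\beta\ge 0$ I would substitute $s=\tau c\norm{A}^2\in(0,2)$ to write the subtracted term as $\tfrac{\sigma_{\min}(A)^2}{c\norm{A}^2}s(2-s)$ and bound it by $\tfrac{\sigma_{\min}(A)^2}{c\norm{A}^2}\le\tfrac1c\le1$, using $s(2-s)\le 1$, $\sigma_{\min}(A)\le\norm{A}$, and $c\ge 1$ (which follows from $\EE(\norm{x}^4)=cd\ge(\EE(\norm{x}^2))^2=d^2$). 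Finally, iterating the contraction and taking expectations over the successive independent draws of $x$ via the law of total expectation produces $\EE(\norm{Av^{k+1}-b}^2)\le\beta^{k+1}\norm{Av^0-b}^2$.

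I expect the genuinely delicate step to be the membership $Av^k-b\in\rg(A)$, which is exactly where consistency ($b\in\rg(A)$) enters. Without it one only controls the range component of the residual: decomposing $b=b'+b''$ with $b'\in\rg(A)$ and $b''\in\rg(A)^\bot=\ker(A^T)$ one has $A^T(Av^k-b)=A^T(Av^k-b')$, so the contraction would govern $\norm{Av^k-b'}^2$ rather than $\norm{Av^k-b}^2$, leaving the orthogonal part $\norm{b''}^2$ as an irreducible offset and ruling out convergence of the residual to zero.
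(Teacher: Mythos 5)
Your proposal is correct and follows essentially the same route as the paper's own proof: a one-step residual estimate (which the paper re-derives inside the proof, identical to Theorem~\ref{thm:rangeconvergence}), followed by the spectral bound $\norm{A^T(Av^k-b)}\geq\sigma_{\min}(A)\norm{Av^k-b}$ and induction via the law of total expectation. If anything, you are more careful than the paper, which applies the spectral bound silently: your verification that $Av^k-b\in\rg(A)$ (via consistency, or automatically when $A$ has full row rank so that $\ker(A^T)=\{0\}$) is precisely the point the paper's proof leaves implicit, and your explicit check that $0\leq\beta<1$ fills in a claim the paper states without proof.
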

\begin{proof}
    Similar to the proof of Theorem \ref{thm:onestep} we calculate
    \begin{align*}
\norm{Av^{k+1} - b}^2 =& \norm{Av^{k} - \tau \scp{A v^{k} - b}{Ax} Ax - b}^2  \\
=& \norm{Av^{k} - b}^2 - 2 \tau \scp{Av^{k}-b}{Ax} \cdot \scp{Av^{k}-b}{Ax} \\ &+ \tau^2 \scp{Av^{k}-b}{Ax}^2 \norm{Ax}^2 \\
      \leq& \norm{Av^{k} - b}^2 - 2 \tau \scp{Av^{k}-b}{Ax}^2 + \tau^2 \norm{A}^2 \scp{Av^{k}-b}{Ax}^2\norm{x}^{2}
\end{align*}
Considering $\EE(xx^T) = I$, taking the expectation with respect to $x$ leads to
\begin{align*}
  \EE(\norm{Av^{k+1} - b}^2) & \leq \norm{Av^{k} - b}^2 - 2 \tau \EE(\scp{Av^{k}-b}{Ax}^2) \\ &\quad+ \tau^2 \norm{A}^2 \EE(\scp{Av^{k}-b}{Ax}^2\norm{x}^{2})\\
  & = \norm{Av^{k} - b}^2 - 2 \tau \norm{A^{T}(Av^{k}-b)}^2 + \tau^2 \norm{A}^2 c \norm{A^{T}(Av^k-b)}^{2}\\
                             & = \norm{Av^{k} - b}^2 - \tau (2- \tau c \norm{A}^2 ) \norm{A^T (Av^k - b)}^2 \\
  & \leq \norm{Av^{k}-b}^{2} - \tau(2-\tau c\norm{A}^{2}) \sigma_{\min}(A)^{2}\norm{Av^{k}-b}^{2}\\
 &= \beta  \norm{Av^{k} - b}^2.
\end{align*}
This proves the first statement. By induction and the law of total expectation, the second result follows. 
\end{proof}

Minimizing $\beta$ over $\tau$ leads to the optimal stepsize $\tau = 1/(c\norm{A}^{2})$ and
\begin{align*}
\beta = 1 - \tfrac{\sigma_{\min}(A)^{2}}{c\norm{A}^{2}}.
\end{align*}
This shows:

\begin{corollary}
\label{cor:linearconvergenceresidual}
Let $Av = b$ be a linear system with $\sigma_{\min}(A)>0$ and let the sequence $(v^k)_k$ be generated by Algorithm \ref{alg:sgd} with
\begin{align*}
\tau = \frac{1}{c \norm{A}^2},
\end{align*}
where the $x$ are sampled according to Assumption \ref{ass:distribution}. 

Then it holds that
\begin{align*}
\EE(\norm{Av^{k}-b}^2) \leq \left(1-\frac{\sigma_{\min}(A)^{2}}{c \norm{A}^2}\right)^k \norm{Av^{0}-b}^2
\end{align*}
\end{corollary}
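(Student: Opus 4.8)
The plan is to obtain this corollary as an immediate specialization of Theorem~\ref{thm:linearconvergenceresidual}. That theorem already establishes the one-step contraction $\EE(\norm{Av^{k+1}-b}^2) \leq \beta\norm{Av^{k}-b}^2$, and hence the iterated bound $\EE(\norm{Av^{k}-b}^2) \leq \beta^{k}\norm{Av^{0}-b}^2$, for every admissible stepsize $\tau\in(0, 2/(c\norm{A}^2))$, with contraction factor $\beta = 1 - \tau\sigma_{\min}(A)^2(2 - \tau c\norm{A}^2)$. All that remains is to insert the particular choice $\tau = 1/(c\norm{A}^2)$ and simplify.

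First I would check that this stepsize is admissible: since $c>0$ and $\norm{A}>0$, the value $1/(c\norm{A}^2)$ lies strictly between $0$ and $2/(c\norm{A}^2)$, so Theorem~\ref{thm:linearconvergenceresidual} applies verbatim and in particular yields $0<\beta<1$ under the standing hypothesis $\sigma_{\min}(A)>0$. Substituting $\tau = 1/(c\norm{A}^2)$ then makes the factor $2 - \tau c\norm{A}^2$ collapse to $2-1=1$, while the prefactor $\tau\sigma_{\min}(A)^2$ becomes $\sigma_{\min}(A)^2/(c\norm{A}^2)$, so that $\beta = 1 - \sigma_{\min}(A)^2/(c\norm{A}^2)$, precisely the rate claimed.

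To justify that this stepsize is the optimal one (as asserted in the text preceding the corollary), I would view $\beta$ as a function of $\tau$, namely $\beta(\tau) = 1 - 2\tau\sigma_{\min}(A)^2 + \tau^2 c\norm{A}^2\sigma_{\min}(A)^2$, a convex quadratic in $\tau$ with positive leading coefficient $c\norm{A}^2\sigma_{\min}(A)^2$. Setting its derivative to zero gives the unique unconstrained minimizer $\tau = 1/(c\norm{A}^2)$, which happily falls inside the admissible interval, so no projection onto the feasible range is needed.

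There is no genuine obstacle here: the whole argument is a single substitution into an already-proven estimate followed by an elementary one-variable minimization. The only point that warrants any care is confirming that the $\beta$-minimizing stepsize does not fall outside the interval $(0, 2/(c\norm{A}^2))$ on which Theorem~\ref{thm:linearconvergenceresidual} was established; once that is verified, the exponential bound and the explicit constant follow mechanically.
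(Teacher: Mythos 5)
Your proposal is correct and follows exactly the paper's own route: the corollary is obtained by substituting the stepsize $\tau = 1/(c\norm{A}^{2})$ (the minimizer of the quadratic $\beta(\tau)$) into the contraction factor of Theorem~\ref{thm:linearconvergenceresidual} and iterating the one-step bound. Your additional check that this stepsize lies in the admissible interval $(0,2/(c\norm{A}^{2}))$ is a small but welcome point of care that the paper leaves implicit.
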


In conclusion, we have shown that the stochastic gradient method with adjoint sampling leads to a linear converge rate in terms of distance to the solution (if it is unique) and the residual (if the system is consistent). For systems which are not full rank we still get sublinear convergence of the least squares residual. The standard stochastic gradient method for the least squares functional where the objective is written as 
$\sum_{i=1}^{m} (a_{i}^{T}v - b_{i})^{2}$
with rows $a_{i}^{T}$ of $A$, leads (after taking the steepest descent stepsize) to the randomized Kaczmarz method. For this one obtains the convergence of the iterates (and residuals) with linear rate $\left( 1 - \frac{\sigma_{\min}(A)^{2}}{\norm{A}_{F}^{2}} \right)$ (cf.~\cite{strohmer2009kaczmarz}). Since $\norm{A}_{F}^{2}\leq d\norm{A}^{2}$ and $c\geq d$, the rate in Corollary~\ref{cor:linearconvergenceresidual} is slightly worse than the one for the randomized Kaczmarz method. However, the Kaczmarz method only needs access to a single row per iterations (something we do not have access to in the situation of the this paper), while SGDAS needs two applications of the linear map per iteration.

All our results have at least two drawbacks:
\begin{enumerate}
\item The convergence is slow due to the factor $c$ which is, in all cases of our random sampling, larger than the input dimension $d$. 
\item The stepsize conditions depend on the operator norm $\norm{A}$, which is usually not known under our assumptions. Both problems can be circumvented by random descent as we show in the next section.
\end{enumerate}

\subsection{Random descent}
\label{sec:rd-convergence}

Now we turn to the analysis of the random descent algorithm (Algorithm~\ref{alg:rd}). While this algorithm just takes steps in random directions, we will be able to show linear convergence towards minimizers by using optimal stepsizes. Hence, as a first step, we calculate the optimal stepsize for minimizing the least squares functional in some search direction. 

\begin{lemma}
\label{lem:rdtau}
    The minimum of $\tau\mapsto \tfrac{1}{2} \norm{A(v^k+\tau x) - b}^2$
    is attained at
    \begin{align*}
        \tau_k = \left\{\begin{array}{cl} - \frac{\scp{Av^k - b}{Ax}}{\norm{Ax}^2} & Ax \neq 0, \\ 0 & Ax = 0. \end{array}\right.
    \end{align*}
\end{lemma}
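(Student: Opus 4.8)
The plan is to reduce the problem to minimizing a univariate quadratic polynomial in $\tau$. Writing $\phi(\tau) \vcentcolon= \tfrac12\norm{A(v^k + \tau x) - b}^2$ and setting $r^k \vcentcolon= Av^k - b$, I would first expand the squared norm using bilinearity of the inner product to obtain
\begin{align*}
\phi(\tau) = \tfrac12\norm{r^k}^2 + \tau\scp{r^k}{Ax} + \tfrac{\tau^2}{2}\norm{Ax}^2.
\end{align*}
This displays $\phi$ explicitly as a quadratic polynomial in the scalar $\tau$ whose leading coefficient is $\tfrac12\norm{Ax}^2 \geq 0$, so the sign (strict positivity versus vanishing) of this coefficient governs the entire analysis and naturally produces the two cases in the statement.

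In the case $Ax \neq 0$ I would argue that $\norm{Ax}^2 > 0$, hence $\phi$ is a strictly convex quadratic and therefore has a unique global minimizer, which is the unique critical point. Differentiating gives $\phi'(\tau) = \scp{r^k}{Ax} + \tau\norm{Ax}^2$, and solving $\phi'(\tau) = 0$ yields the claimed value $\tau_k = -\scp{r^k}{Ax}/\norm{Ax}^2 = -\scp{Av^k-b}{Ax}/\norm{Ax}^2$. One may alternatively avoid calculus entirely and complete the square, writing $\phi(\tau) = \tfrac12\norm{Ax}^2\bigl(\tau + \scp{r^k}{Ax}/\norm{Ax}^2\bigr)^2 + \text{const}$, which exhibits the minimizer directly; I would likely prefer the derivative computation for brevity.

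In the degenerate case $Ax = 0$ the quadratic and linear terms both vanish, so $\phi(\tau) = \tfrac12\norm{r^k}^2$ is constant in $\tau$ and every real $\tau$ is a minimizer; the choice $\tau_k = 0$ is then simply a canonical convention (it corresponds to taking no step when the search direction lies in $\ker(A)$, leaving the residual unchanged). I do not anticipate any genuine obstacle here—the computation is elementary. The only point requiring a word of care is precisely this degenerate branch: since the minimizer is not unique when $Ax = 0$, I would state clearly that $\tau_k = 0$ is selected by convention rather than forced by optimality, so that the piecewise definition in the statement is well posed.
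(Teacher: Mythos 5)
Your proposal is correct and follows essentially the same route as the paper: the paper likewise dispatches the case $Ax = 0$ by noting the objective is then constant in $\tau$, and otherwise differentiates the convex quadratic $\tau \mapsto \tfrac12\norm{A(v^k+\tau x)-b}^2$, solves $\scp{Av^k-b}{Ax} + \tau\norm{Ax}^2 = 0$, and obtains the stated stepsize. Your explicit expansion of the quadratic and the remark that $\tau_k = 0$ in the degenerate branch is a convention rather than a forced choice are fine additions, but they do not change the substance of the argument.
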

\begin{proof}
If $Ax = 0$, the objective does not depend on $\tau$ at all.
Otherwise the objective is convex and the minimizer is provided by the solution of
\begin{align*}
    0 &= \frac{\partial}{\partial \tau} \frac{1}{2} \norm{A(v^k+\tau x) - b}^2 =  \scp{A^T(Av^k + \tau A x - b)}{x} =  \scp{Av^k-b}{Ax} + \tau \norm{Ax}^2,
\end{align*}
i.e. $\tau = - \frac{\scp{Av^k-b}{Ax}}{\norm{Ax}^2}$.
\end{proof}

Note that this stepsize may be positive or negative. Most notably, we can evaluate this stepsize under our restrictive assumptions stated in the introduction: We only need to evaluate $A$ two times (once at the direction $x$ and once at the current iterate). Moreover, no knowledge of the operator norm of $A$ is needed to use this stepsize in practice. Since this step is optimal in direction $x$, we immediately get linear convergence of random descent (Algorithm~\ref{alg:rd}) with this stepsize  by comparison with our results for the stochastic gradient descent (Algorithm~\ref{alg:sgd}). If $v^{k+1} = v^{k} - \tau_{k}x$  from the random descent with optimal stepsize and $\tilde v^{k+1} = v^{k} - \tau \scp{Av^{k}-b}{Ax}x$ with some $\tau$ between zero and $2/(c\norm{A}^{2})$ we always have 
\begin{align*}
\norm{Av^{k+1}-b}^{2}\leq \norm{A\tilde v^{k+1}-b}^{2},
\end{align*}
i.e. random descent does decrease the residual faster than stochastic gradient descent with adjoint sampling. Thus, we can transfer some results from the previous section to random descent:

\begin{theorem}
\label{thm:convergence-random-descent}
Let $Av=b$ be a linear system and let the sequence $(v^k)_k$ be generated by Algorithm \ref{alg:rd}, where the $x$ are sampled according to Assumption \ref{ass:distribution} and the $\tau_k$ are given by
\begin{align*}
\tau_k = - \tfrac{\scp{Av^k - b}{Ax}}{\norm{Ax}^2}
\end{align*}
Then we have 
\begin{align*}
  \min_{0 \leq k \leq N-1} \EE(\norm{A^T(Av^k-b)}^2) \leq \frac{c \norm{A}^2 \norm{b}^2}{N}.
\end{align*}
Moreover, the sequence $\left(\EE(\norm{Av^k-b})\right)_k$ of the expected residuals is decreasing.
If furthermore $\lambda_{\min}(AA^{T})>0$, then the sequence of expected residuals
$\left(\EE(\norm{Av^k-b}^2)\right)_k$
does converge at least linearly, i.e. we have
\begin{align*}
  \EE(\norm{Av^k-b}^2) \leq \left(1- \tfrac{\lambda_{\min}(AA^{T})}{c \norm{A}^2} \right)^{k} \norm{Av^0 - b}^{2}.
\end{align*}
\end{theorem}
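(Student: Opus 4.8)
The plan is to reduce all three assertions to the one-step residual estimate for SGDAS (Theorem~\ref{thm:rangeconvergence}) by means of the comparison principle noted just before the statement. Fix the iterate $v^k$ and a realization of the random direction $x$. Since the random descent stepsize of Lemma~\ref{lem:rdtau} minimizes $\tau\mapsto\tfrac12\norm{A(v^k+\tau x)-b}^2$ over all scalars, the residual it produces is no larger than the one produced by any other scalar multiple of $x$. Comparing in particular against the SGDAS step $\tilde v^{k+1} = v^k - \tau\scp{Av^k-b}{Ax}x$ with the fixed stepsize $\tau = 1/(c\norm{A}^2)$ gives, pointwise in $x$, the inequality $\norm{Av^{k+1}-b}^2 \le \norm{A\tilde v^{k+1}-b}^2$. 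Taking the conditional expectation over $x$ and invoking the one-step bound from the proof of Theorem~\ref{thm:rangeconvergence} for this $\tau$ (for which $\tau(2-\tau c\norm{A}^2) = 1/(c\norm{A}^2)$) yields the master recursion
\begin{align*}
\EE(\norm{Av^{k+1}-b}^2) \le \norm{Av^k-b}^2 - \tfrac{1}{c\norm{A}^2}\norm{A^T(Av^k-b)}^2,
\end{align*}
where the expectation is taken over the $k$th random direction.

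From here the three claims follow in turn. For the monotonicity of $\left(\EE(\norm{Av^k-b})\right)_k$ I would not even need the recursion: since $\tau = 0$ is admissible in the minimization defining the stepsize, we have $\norm{Av^{k+1}-b}\le\norm{Av^k-b}$ for every realization of $x$, so taking expectations and applying the law of total expectation gives the claim. For the sublinear rate I would rearrange the master recursion into $\tfrac{1}{c\norm{A}^2}\norm{A^T(Av^k-b)}^2 \le \norm{Av^k-b}^2 - \EE(\norm{Av^{k+1}-b}^2)$, take full expectations, and sum over $k=0,\dots,N-1$; the right-hand side telescopes to at most $\norm{Av^0-b}^2 = \norm{b}^2$ (using $v^0=0$), exactly as in the proof of Theorem~\ref{thm:sublinear-ls-residual}, and bounding the sum below by $N$ times its minimal term finishes this part.

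For the linear rate the one extra ingredient is the Rayleigh-quotient bound $\norm{A^Tw}^2 = w^T AA^T w \ge \lambda_{\min}(AA^T)\norm{w}^2$, valid for every $w\in\RR^m$. Applying it with $w = Av^k-b$ converts the master recursion into $\EE(\norm{Av^{k+1}-b}^2) \le \bigl(1 - \tfrac{\lambda_{\min}(AA^T)}{c\norm{A}^2}\bigr)\norm{Av^k-b}^2$; iterating this through the law of total expectation produces the stated geometric decay, and the hypothesis $\lambda_{\min}(AA^T)>0$ (which forces $A$ to have full row rank) guarantees that the contraction factor lies in $[0,1)$.

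Once the master recursion is in place the remaining work is essentially bookkeeping, so the only genuine step is establishing that recursion cleanly. The point requiring care is that the comparison must be carried out step by step from the common starting point $v^k$: the auxiliary SGDAS iterate $\tilde v^{k+1}$ is used for a single step only and need not lie on any SGDAS trajectory, so I must ensure that the one-step estimate of Theorem~\ref{thm:rangeconvergence} is applied to a step emanating from the actual random descent iterate $v^k$ rather than from a genuine SGDAS history.
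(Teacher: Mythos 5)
Your proposal is correct and follows essentially the same route as the paper: the paper likewise uses the optimality of the exact linesearch stepsize (Lemma~\ref{lem:rdtau}) to dominate, pointwise in $x$ and from the common iterate $v^k$, a single SGDAS step with admissible fixed stepsize, and then transfers the one-step estimates of Theorem~\ref{thm:rangeconvergence}, Theorem~\ref{thm:sublinear-ls-residual}, Theorem~\ref{thm:linearconvergenceresidual} and Corollary~\ref{cor:linearconvergenceresidual} to random descent. Your write-up merely makes explicit what the paper leaves terse (the master recursion, the telescoping, and the Rayleigh-quotient bound $\norm{A^Tw}^2\ge\lambda_{\min}(AA^T)\norm{w}^2$, which under the full row rank hypothesis is exactly the $\sigma_{\min}(A)^2$ bound used in the cited results), so there is no substantive difference.
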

\begin{proof}
    The first result follows directly from Lemma \ref{lem:rdtau}, while the second is an immediate result of the comparison with Theorem \ref{thm:linearconvergenceresidual}, Corollary \ref{cor:linearconvergenceresidual}, in which linear convergence is proven for non-optimal fixed step sizes.
\end{proof}

We would expect a better convergence rate due to the optimal stepsize. It turns out that the refined convergence analysis differs for different sampling schemes for the random vectors $x$.  The central object is the matrix
\begin{align}
  M := \EE \left( \tfrac{xx^{T}}{\norm{Ax}^{2}} \right)\in\RR^{d\times d}\label{eq:def-M}
\end{align}
where we have to assume that the expectation exists (see Remark~\ref{rem:M-matrix} below).
One sees that $M$ only depends on the marginal distribution on the unit sphere, so all results that we obtain for the standard normal distribution also apply to the uniform distribution on the sphere. The next theorem shows the role of the matrix $M$. After the theorem we discuss the usefulness of the different estimates in different cases.
\begin{theorem}\label{thm:convergence-random-descent-refined}
  Let $v^{k}$ be generated by Algorithm~\ref{alg:rd} and assume that $M$
  exists. Then it holds
  \begin{enumerate}
  \item For the expectation with respect to the choice of $x$ in the $k$th step we have
    \begin{align*}
      \EE(\norm{Av^{k+1}-b}^2)  = \norm{Av^{k}-b}^{2} - \scp{A^{T}(Av^{k}-b)}{M(A^{T}(Av^{k}-b)}.
    \end{align*}
  \item If $\lambda_{\min}(M)>0$ it holds that 
\begin{align}\label{eq:conv-ls-residual}
\min_{0\leq k\leq N-1}\norm{A^{T}(Av^{k}-b)}^{2} \leq \tfrac{\norm{b}^{2}}{\lambda_{\min}(M)N}.
\end{align}
\item It holds that 
\begin{align}\label{eq:conv-residual-rd-d-leq-m}
\EE(\norm{Av^{k+1}-b}^{2})\leq (1-\lambda_{\min}(M)\sigma_{\min}(A)^{2})\EE(\norm{Av^{k}-b}^{2}).
\end{align}
and
\begin{align}\label{eq:conv-residual-rd-m-leq-d}
\EE(\norm{Av^{k+1}-b}^{2})\leq (1-\lambda_{\min}(AMA^{T}))\EE(\norm{Av^{k}-b}^{2}).
\end{align}
  \end{enumerate}
\end{theorem}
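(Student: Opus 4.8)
The plan is to reduce all three parts to a single exact one-step identity and then apply eigenvalue estimates and telescoping. Throughout write $r^k = Av^k - b$ for the residual. First I would substitute the optimal stepsize from Lemma~\ref{lem:rdtau} into the update $v^{k+1} = v^k + \tau_k x$, so that on the event $Ax \neq 0$
\[
  Av^{k+1} - b = r^k - \tfrac{\scp{r^k}{Ax}}{\norm{Ax}^2} Ax,
\]
which is $r^k$ with its orthogonal projection onto $\operatorname{span}(Ax)$ removed. Consequently $Av^{k+1}-b \perp Ax$, and the Pythagorean theorem yields the pointwise-in-$x$ identity
\[
  \norm{Av^{k+1}-b}^2 = \norm{r^k}^2 - \tfrac{\scp{r^k}{Ax}^2}{\norm{Ax}^2}.
\]
On the event $Ax = 0$ the iterate does not move and the subtracted term is zero, so the identity persists with the convention that the integrand vanishes there; this is exactly the regularity needed for $M$ to be well defined (Remark~\ref{rem:M-matrix}).

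For the first assertion I would write $\scp{r^k}{Ax}^2 = (A^T r^k)^T x x^T (A^T r^k)$ and pull the expectation through the deterministic vector $A^T r^k$, giving
\[
  \EE\Bigl(\tfrac{\scp{r^k}{Ax}^2}{\norm{Ax}^2}\Bigr) = (A^T r^k)^T \EE\Bigl(\tfrac{xx^T}{\norm{Ax}^2}\Bigr) (A^T r^k) = \scp{A^T r^k}{M A^T r^k}.
\]
Taking the conditional expectation of the Pythagorean identity then produces the claimed equality in part~1.

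Parts~2 and~3 are quadratic-form estimates applied to this identity together with the law of total expectation. Since $M$ is symmetric positive semidefinite, $\scp{A^T r^k}{M A^T r^k} \geq \lambda_{\min}(M)\norm{A^T r^k}^2$; rearranging part~1 and telescoping the sum over $k = 0,\dots,N-1$ gives $\lambda_{\min}(M)\sum_k \EE(\norm{A^T r^k}^2) \leq \norm{r^0}^2 = \norm{b}^2$ (using $v^0 = 0$), and bounding the sum below by $N$ times its minimum yields \eqref{eq:conv-ls-residual}. For \eqref{eq:conv-residual-rd-d-leq-m} I would chain $\scp{A^T r^k}{M A^T r^k} \geq \lambda_{\min}(M)\norm{A^T r^k}^2 \geq \lambda_{\min}(M)\sigma_{\min}(A)^2 \norm{r^k}^2$, while for \eqref{eq:conv-residual-rd-m-leq-d} I would instead read the subtracted term as $\scp{r^k}{AMA^T r^k} \geq \lambda_{\min}(AMA^T)\norm{r^k}^2$; subtracting from $\norm{r^k}^2$ and taking full expectation gives the two contraction estimates.

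The step requiring care, and the one I expect to be the main obstacle, is the inequality $\norm{A^T r^k}^2 \geq \sigma_{\min}(A)^2\norm{r^k}^2$ used for \eqref{eq:conv-residual-rd-d-leq-m}: because $\sigma_{\min}(A)$ is the smallest \emph{positive} singular value, this holds only for $r^k \in \rg(A)$. Since $Av^k \in \rg(A)$, the residual lies in $\rg(A)$ exactly when $b \in \rg(A)$; moreover the component of $b$ in $\rg(A)^\perp = \ker(A^T)$ is invariant under the projection step (as $Ax \in \rg(A)$) and is annihilated by $A^T$, so this is precisely the consistent regime and mirrors the estimate already used in Theorem~\ref{thm:linearconvergenceresidual}. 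Likewise, \eqref{eq:conv-residual-rd-m-leq-d} is informative only when $A$ has full row rank, so that $AMA^T$ is positive definite on all of $\RR^m$; the two bounds are the natural estimates in the overdetermined ($d \leq m$) and underdetermined ($m \leq d$) cases, respectively.
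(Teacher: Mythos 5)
Your proposal is correct and follows essentially the same route as the paper's proof: the exact one-step identity $\norm{Av^{k+1}-b}^2 = \norm{Av^k-b}^2 - \scp{A^T(Av^k-b)}{\tfrac{xx^T}{\norm{Ax}^2}A^T(Av^k-b)}$ (the paper obtains it by expanding the square rather than via the projection/Pythagoras framing, which is the same computation), then taking expectation to produce $M$, bounding the quadratic form by $\lambda_{\min}(M)$ resp.\ $\lambda_{\min}(AMA^T)$, chaining with $\sigma_{\min}(A)$, and telescoping with $v^0=0$ for part 2. Your extra care about the event $Ax=0$ and about the fact that $\norm{A^T r^k}\geq \sigma_{\min}(A)\norm{r^k}$ requires $r^k\in\rg(A)$, i.e.\ a consistent system (an estimate the paper applies silently here and in Theorem~\ref{thm:linearconvergenceresidual}), is a refinement of, not a departure from, the paper's argument.
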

\begin{proof}
 For the first claim we calculate 
\begin{align*}
  \norm{Av^{k+1}-b}^2  & = \norm{A(v^{k}-\tfrac{\scp{Av^{k}-b}{Ax}}{\norm{Ax}^{2}}x)-b}^{2}\nonumber \\
                            & = \norm{Av^{k}-b}^{2} - 2\tfrac{\scp{Av^{k}-b}{Ax}^{2}}{\norm{Ax}^{2}} + \tfrac{\scp{Av^{k}-b}{Ax}^{2}\norm{Ax}^{2}}{\norm{Ax}^{4}}\nonumber\\
                       & = \norm{Av^{k}-b}^{2} - \tfrac{\scp{Av^{k}-b}{Ax}^{2}}{\norm{Ax}^{2}\nonumber}\\
  & = \norm{Av^{k}-b}^{2} - \scp{A^{T}(Av^{k}-b)}{\tfrac{xx^{T}}{\norm{Ax}^{2}}A^{T}(Av^{k}-b)},\label{eq:descent-residual}
\end{align*}
and taking the expectation over the random choice of $x$ proves the claim.

For the second claim we use the law of total expectation and estimate 
\begin{align*}
\EE(\norm{Av^{k+1}-b}^2) \leq \EE(\norm{Av^{k}-b}^{2}) - \lambda_{\min}(M)\norm{A^{T}(Av^{k}-b)}^{2}
\end{align*}
and rearranging, summing up and estimating by the minimum gives the claim (similar as in the proof of Theorem~\ref{thm:sublinear-ls-residual}).

For the third claim we further estimate $\norm{A^{T}(Av^{k}-b)}\geq \sigma_{\min}(A)\norm{Av^{k}-b}$ and get 
\begin{align*}
  \EE(\norm{Av^{k+1}-b}^{2}) &  \leq \norm{Av^{k}-b}^2 -  \lambda_{\min}(M)\sigma_{\min}(A)^{2}\norm{Av^{k}-b}^{2}.
\end{align*}
The second inequality in 3. follows directly from 1.
\end{proof}

\begin{remark}\label{rem:M-matrix}
  The two estimates in~\eqref{eq:conv-residual-rd-d-leq-m} and \eqref{eq:conv-residual-rd-m-leq-d} in Theorem~\ref{thm:convergence-random-descent-refined} 3. are relevant in different circumstances: Estimate~\eqref{eq:conv-residual-rd-m-leq-d} with the contraction factor $(1-\lambda_{\min}(M)\sigma_{\min}(A)^{2})$ is relevant if $\lambda_{\min}(M)>0$. Whether this is true depends on the dimensions of $A$, the distribution of the directions $x$, and the matrix $A$ (see below for further details). For overdetermined systems $A$, i.e. for $m>d$ and $A$ with rank $d$ this is true for all the isotropic sampling schemes we considered here.

The estimate~\eqref{eq:conv-residual-rd-m-leq-d} with contraction factor $(1-\lambda_{\min}(AMA^{T}))$ is only relevant if the minimal eigenvalue is positive. This cannot happen when $m>d$ since $AMA^{T}$ is $m\times m$ with rank at most $d$. In the case $m<d$ the matrix $A$ has a nontrivial kernel and hence, the expectation $\EE(\tfrac{xx^{T}}{\norm{Ax}^{2}})$ may not exist (in fact, it does not exist for $x\sim\cN(0,I_{d})$ and $x\sim\Unif(\sqrt{d}S^{d-1})$). However, for the discrete distributions (random coordinate vectors and Rademacher vectors), this depends on whether $\PP(x\in\ker(A))>0$ or not. If this is not the case, the expectation exists and is finite. Moreover, in this case we have $\lambda_{\min}(AMA^{T})>0$ for $d>m$ if $\rank(A)=m$.
\end{remark}

The convergence speed is governed by spectral properties of $M$. A simple and crude estimate shows the following bounds for the eigenvalues of $M$ which holds for all isotropic random vectors:

\begin{proposition}[Estimates for general isotropic random vectors]\label{prop:expectation-M-AMA}
  Let $d\leq m$, $A\in\RR^{m\times d}$ and $x\in\RR^{d}$ be an isotropic random vector. Then it holds for $M = \EE(\tfrac{xx^T}{\norm{Ax}^2})$ that 
\begin{align*}
\frac1{d\sigma_{\max}(A)^{2}}I_{d} \mleq M \mleq \frac1{d\sigma_{\min}(A)^{2}}I_{d}.
\end{align*}
Consequently it holds that 
\begin{align*}
\tfrac1{d\sigma_{\max}(A)^{2}}AA^{T} \mleq\EE(\tfrac{Axx^TA^{T}}{\norm{Ax}^2})  \mleq \tfrac1{d\sigma_{\min}(A)^{2}}AA^{T}\mleq \tfrac1d \kappa^{2}I_{d}.
\end{align*}
with $\kappa = \sigma_{\max}(A)^{2}/\sigma_{\min}(A)^{2}$ being the condition number of $A$.
\end{proposition}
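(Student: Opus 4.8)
The plan is to reduce everything to the elementary two-sided singular value bound
\begin{align*}
\sigma_{\min}(A)^{2}\norm{x}^{2}\leq \norm{Ax}^{2}\leq \sigma_{\max}(A)^{2}\norm{x}^{2},
\end{align*}
which holds for every $x\in\RR^{d}$ since $d\leq m$ and (for the denominators to be meaningful) $A$ has full column rank, so $\sigma_{\min}(A)>0$. First I would invert this to a two-sided bound on the scalar $1/\norm{Ax}^{2}$, and, since $xx^{T}\mgeq 0$, multiply through to get the pointwise rank-one matrix sandwich
\begin{align*}
\tfrac{1}{\sigma_{\max}(A)^{2}}\,\tfrac{xx^{T}}{\norm{x}^{2}} \mleq \tfrac{xx^{T}}{\norm{Ax}^{2}} \mleq \tfrac{1}{\sigma_{\min}(A)^{2}}\,\tfrac{xx^{T}}{\norm{x}^{2}},
\end{align*}
valid for every realization of $x$ (which is nonzero almost surely). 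As the Loewner order is preserved by expectation (a positive linear map), taking expectations gives
\begin{align*}
\tfrac{1}{\sigma_{\max}(A)^{2}}\,\EE\!\left(\tfrac{xx^{T}}{\norm{x}^{2}}\right) \mleq M \mleq \tfrac{1}{\sigma_{\min}(A)^{2}}\,\EE\!\left(\tfrac{xx^{T}}{\norm{x}^{2}}\right).
\end{align*}

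The crux is then to identify $\EE(xx^{T}/\norm{x}^{2})=\tfrac1d I_{d}$, after which the first asserted chain $\tfrac{1}{d\sigma_{\max}(A)^{2}}I_{d}\mleq M\mleq\tfrac{1}{d\sigma_{\min}(A)^{2}}I_{d}$ is immediate. For the admissible sampling schemes this identity holds for structural reasons: whenever $\norm{x}^{2}=d$ deterministically (uniform on the sphere, Rademacher, and random coordinate vectors) one pulls out the constant and uses isotropy, $\EE(xx^{T}/\norm{x}^{2})=\tfrac1d\EE(xx^{T})=\tfrac1d I_{d}$, while in the Gaussian case the normalized direction $x/\norm{x}$ is uniform on the sphere, so $\EE(xx^{T}/\norm{x}^{2})=\EE\big((x/\norm{x})(x/\norm{x})^{T}\big)=\tfrac1d I_{d}$ by rotational symmetry. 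I expect this to be the main obstacle: it is precisely here that more than plain isotropy is required, since $\EE(xx^{T})=I_{d}$ does not by itself force the direction $x/\norm{x}$ to be isotropic, and the identity must be argued from the structure of the distributions (constant radius or rotationally invariant direction) rather than from isotropy alone.

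For the second chain I would apply the congruence $P\mapsto APA^{T}$, which preserves the Loewner order even for rectangular $A$ (because $z^{T}A(Q-P)A^{T}z=(A^{T}z)^{T}(Q-P)(A^{T}z)\geq 0$), to the matrix bounds just obtained. Using $A\,\tfrac{xx^{T}}{\norm{Ax}^{2}}\,A^{T}$ and $A I_{d}A^{T}=AA^{T}$ this yields directly
\begin{align*}
\tfrac{1}{d\sigma_{\max}(A)^{2}}AA^{T}\mleq \EE\!\left(\tfrac{Axx^{T}A^{T}}{\norm{Ax}^{2}}\right)\mleq \tfrac{1}{d\sigma_{\min}(A)^{2}}AA^{T}.
\end{align*}
The trailing estimate then follows from the spectral bound $AA^{T}\mleq\sigma_{\max}(A)^{2}I_{m}$, giving $\tfrac{1}{d\sigma_{\min}(A)^{2}}AA^{T}\mleq\tfrac{\sigma_{\max}(A)^{2}}{d\sigma_{\min}(A)^{2}}I_{m}$, i.e.\ a factor of the condition number $\kappa=\sigma_{\max}(A)^{2}/\sigma_{\min}(A)^{2}$ over $d$; the remaining work is only bookkeeping of constants.
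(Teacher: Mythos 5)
Your proposal follows the paper's own route almost step for step: the pointwise bound $\sigma_{\min}(A)^{2}\norm{x}^{2}\leq\norm{Ax}^{2}\leq\sigma_{\max}(A)^{2}\norm{x}^{2}$, the resulting Loewner sandwich for $xx^{T}/\norm{Ax}^{2}$, taking expectations, and reducing everything to the matrix $\EE\bigl(xx^{T}/\norm{x}^{2}\bigr)$; the second chain via the congruence $P\mapsto APA^{T}$ and $AA^{T}\mleq\sigma_{\max}(A)^{2}I_{m}$ is also exactly how the paper concludes. The one point where you genuinely diverge is the crux identity $\EE\bigl(xx^{T}/\norm{x}^{2}\bigr)=\tfrac1d I_{d}$: the paper asserts that this holds for all isotropic random vectors and cites nothing further, whereas you verify it separately for each sampling scheme (constant radius, or rotationally invariant direction in the Gaussian case) and explicitly warn that isotropy alone does not imply it. Your caution is justified and in fact exposes a real gap in the paper's argument: isotropy does not give the identity. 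For instance, with $d=m=2$ take $x=\pm 2e_{1}$ with probability $\tfrac18$ each and $x=\pm\tfrac{2}{\sqrt{3}}e_{2}$ with probability $\tfrac38$ each; then $\EE(xx^{T})=I_{2}$, yet $\EE\bigl(xx^{T}/\norm{x}^{2}\bigr)=\diag(\tfrac14,\tfrac34)$, so for $A=I_{2}$ the asserted conclusion $M=\tfrac12 I_{2}$ fails. (More generally, $x=e_{k}/\sqrt{p_{k}}$ with probability $p_{k}$ is isotropic but has $\EE\bigl(xx^{T}/\norm{x}^{2}\bigr)=\diag(p_{1},\dots,p_{d})$.) Thus the proposition as literally stated, for arbitrary isotropic $x$, is false, and your distribution-by-distribution verification is the correct repair: the bounds hold precisely for the sampling schemes listed after Assumption~\ref{ass:distribution}, i.e.\ whenever $\norm{x}$ is almost surely constant or $x/\norm{x}$ is uniformly distributed on the sphere. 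Your remaining steps (expectation preserves the Loewner order, the congruence argument for rectangular $A$, and the implicit need for full column rank so that $\norm{Ax}>0$ almost surely and the expectation exists) are all sound, so your proof is correct in the setting where the statement is actually true, and more careful than the paper's.
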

\begin{proof}
  Since $m\geq d$ we have $\sigma_{\min}(A)^{2}\norm{x}^{2}\leq \norm{Ax}^{2}\leq \sigma_{\max}(A)^{2}\norm{x}^{2}$ and hence 
\begin{align*}
 \tfrac{xx^T}{\sigma_{\max}(A)^{2}\norm{x}^{2}} \leq \tfrac{xx^{T}}{\norm{Ax}^{2}} \leq \tfrac{xx^{T}}{\sigma_{\min}(A)^{2}\norm{x}^{2}}.
\end{align*}
Hence, we only have to compute $\EE(\tfrac{xx^T}{\norm{x}^{2}})$ and for isotropic random variables it holds that $\EE(\tfrac{xx^T}{\norm{x}^{2}}) = \tfrac1d I_{d}$ which proves the claim.
\end{proof}

Unfortunately, these bounds do not imply any faster convergence for RD in comparison with SGDAS (compare the statements from Theorem~\ref{thm:sublinear-ls-residual} and Corollary~\ref{cor:linearconvergenceresidual} with Theorem~\ref{thm:convergence-random-descent-refined} and Proposition~\ref{prop:expectation-M-AMA}, respectively). However, numerical experiments indicate that the smallest and largest eigenvalues of $M$ are usually closer together than the simple bounds from Proposition~\ref{prop:expectation-M-AMA} suggest.
The following lemma shows better bounds for the standard normal distribution (and, by the above remark, the uniform distribution on the sphere):

\begin{lemma}[Estimates for the normal distribution]\label{lem:ev-estimates-M-normal-dist}
  Let $x\sim\cN(0,I_{d})$ or $x\sim\Unif(\sqrt{d}S^{d-1}) $and $m>d>2$. Then it holds that the matrices $A^{T}A$ and $M = \EE \left( \tfrac{xx^{T}}{\norm{Ax}^{2}} \right)$ can be diagonalized simultaneously. More precisely, if $A^{T}A = U\diag(\lambda_{i})U^{T}$ with orthonormal $U$ with columns $u_{i}$, $i=1,\dots,d$, then it is $M = U\diag(\mu_{i})U^{T}$ and the eigenvalues $\mu_{i}$ of $M$ fulfill
  \begin{align*}
    \frac{1}{2(\lambda_{1} + \cdots + \lambda_{d})}\frac{\Gamma(\tfrac{d}2)}{\Gamma(\tfrac{d+1}{2})}\leq \mu_i &= \EE \left( \frac{\scp{u_{i}}{x}^{2}}{\sum_{j=1}^d\lambda_{j}\scp{u_{j}}{x}^{2}} \right) \\
    & \leq \frac1{d(\lambda_{1}\cdot\cdots\cdot\lambda_{d})^{1/d}}\frac{1}{\pi^{d/2}}\Gamma(\tfrac32-\tfrac1d)\Gamma(\tfrac12-\tfrac1d)^{d-1}.
\end{align*}
\end{lemma}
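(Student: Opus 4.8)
The plan is to prove the three assertions—simultaneous diagonalization, upper bound, lower bound—separately.

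\emph{Simultaneous diagonalization.} I would use that the laws of $\cN(0,I_d)$ and $\Unif(\sqrt d\,\SS^{d-1})$ are orthogonally invariant and that $xx^T/\norm{Ax}^2$ is invariant under $x\mapsto rx$. Writing $A^TA=U\diag(\lambda_i)U^T$ and $y=U^Tx$, the coordinates $y_i=\scp{u_i}{x}$ are again i.i.d.\ standard normal (resp.\ have a law invariant under sign flips and permutations), and $\norm{Ax}^2=x^TA^TAx=\sum_j\lambda_j y_j^2$ depends on $y$ only through the squares $y_j^2$. Hence $U^TMU=\EE\bigl(yy^T/\sum_j\lambda_j y_j^2\bigr)$, and for $i\neq j$ the entry $\EE\bigl(y_iy_j/\sum_l\lambda_l y_l^2\bigr)$ vanishes because the integrand is odd under $y_i\mapsto-y_i$ while the denominator is even. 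This gives $M=U\diag(\mu_i)U^T$ with $\mu_i=\EE\bigl(y_i^2/\sum_j\lambda_j y_j^2\bigr)$, which is the claimed formula.

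\emph{Upper bound.} The key step is to bound the denominator below by the arithmetic--geometric mean inequality, $\sum_j\lambda_j y_j^2\geq d\,(\lambda_1\cdots\lambda_d)^{1/d}\prod_j\abs{y_j}^{2/d}$. Inserting this and using independence of the coordinates factorizes $\mu_i$ as $\frac1{d(\lambda_1\cdots\lambda_d)^{1/d}}\,\EE(\abs{y_i}^{2-2/d})\prod_{j\neq i}\EE(\abs{y_j}^{-2/d})$. The remaining scalar moments are given by $\EE\abs{y}^p=2^{p/2}\Gamma(\tfrac{p+1}2)/\sqrt\pi$, finite exactly for $p>-1$; this is where $d>2$ is used, since it makes $-2/d>-1$. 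The powers of $2$ telescope to $1$ and the factors $1/\sqrt\pi$ to $\pi^{-d/2}$, leaving precisely $\Gamma(\tfrac32-\tfrac1d)\Gamma(\tfrac12-\tfrac1d)^{d-1}$, i.e.\ the stated bound. I expect this part to be routine once the moment formula is in hand.

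\emph{Lower bound.} I would start from the exact Schwinger identity $1/s=\int_0^\infty e^{-ts}\,dt$, which together with $\EE(e^{-sy^2})=(1+2s)^{-1/2}$ and $\EE(y^2e^{-sy^2})=(1+2s)^{-3/2}$ turns $\mu_i$ into the exact integral
\[
  \mu_i=\int_0^\infty (1+2t\lambda_i)^{-3/2}\textstyle\prod_{j\neq i}(1+2t\lambda_j)^{-1/2}\,dt .
\]
Using $\lambda_i\leq\sum_j\lambda_j=:\Lambda$ and an AM--GM estimate on the product collapses this to a single scale $(1+ct)$ with $c\propto\Lambda$; the substitution $u=2t\Lambda$ then extracts the prefactor $\tfrac1{2\Lambda}$ and leaves a dimensionless integral to be bounded below by the Beta value $B(\tfrac d2,\tfrac12)/\sqrt\pi=\Gamma(\tfrac d2)/\Gamma(\tfrac{d+1}2)$.

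The main obstacle is precisely this last step: arranging the elementary estimates so that the surviving constant is \emph{exactly} the stated Gamma ratio. The bound is not tight (for equal eigenvalues $\mu_i=1/\Lambda$, whereas the claim carries the extra factor $\Gamma(\tfrac d2)/\Gamma(\tfrac{d+1}2)<1$), so there is room to spend: a Cauchy--Schwarz estimate $(\EE\abs{\omega_i})^2\leq\EE\bigl(\omega_i^2/\sum_j\lambda_j\omega_j^2\bigr)\,\EE\bigl(\sum_j\lambda_j\omega_j^2\bigr)$ for $\omega=x/\norm{x}$ already yields a lower bound of the same shape via the coordinate moment $\EE\abs{\omega_i}=\Gamma(\tfrac d2)/(\sqrt\pi\,\Gamma(\tfrac{d+1}2))$. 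The delicate point is only to reduce the chosen estimate to the claimed---deliberately simple, non-optimal---constant, which I expect to hinge on a Gautschi-type monotonicity bound for $\Gamma(\tfrac d2)/\Gamma(\tfrac{d+1}2)$.
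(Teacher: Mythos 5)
Your simultaneous-diagonalization argument and your upper bound are, in substance, the paper's own proof: the paper likewise passes to the coordinates $z_i=\scp{u_i}{x}$, kills the off-diagonal entries of $U^{T}MU$ using the sign-flip symmetry of the rotation-invariant law, applies AM--GM to the denominator, and factorizes the resulting one-dimensional Gaussian integrals via $\int_{\RR}(z^{2})^{a}e^{-z^{2}/2}\,\dd z=2^{a+1/2}\Gamma(a+\tfrac12)$, which is your moment formula in disguise; the hypothesis $d>2$ enters through integrability of $\abs{z}^{-2/d}$ exactly as you say.

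The lower bound is where you diverge from the paper, and your instinct about where the difficulty sits is accurate. First, the gap: the Schwinger-identity route with a collapse to a single scale cannot reach the stated constant. With the natural splitting ($(1+2t\lambda_i)^{-3/2}\geq(1+2t\Lambda)^{-3/2}$ with $\Lambda=\sum_j\lambda_j$, then AM--GM over $j\neq i$) you are left with $\tfrac1{2\Lambda}\int_0^\infty(1+u)^{-3/2}\bigl(1+\tfrac{u}{d-1}\bigr)^{-(d-1)/2}\,\dd u$, and at $d=3$ this integral equals $4-\pi\approx0.858$, which is strictly below $\Gamma(\tfrac32)/\Gamma(2)=\sqrt{\pi}/2\approx0.886$; so the claim does not follow at the smallest admissible dimension. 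Bounding every factor by $(1+2t\Lambda)^{-1/2}$ is worse still: it gives only $\tfrac1{d\Lambda}$, which lies below the claimed bound for every $d>2$. By contrast, your Cauchy--Schwarz fallback is not merely ``of the same shape''---it is a complete and correct proof, and you should promote it to the main argument: it yields $\mu_i\geq(\EE\abs{\omega_i})^2/\EE\bigl(\sum_j\lambda_j\omega_j^2\bigr)=\tfrac{d}{\pi\Lambda}\bigl(\Gamma(\tfrac d2)/\Gamma(\tfrac{d+1}2)\bigr)^{2}$, and this dominates $\tfrac1{2\Lambda}\,\Gamma(\tfrac d2)/\Gamma(\tfrac{d+1}2)$ precisely when $\Gamma(\tfrac d2)/\Gamma(\tfrac{d+1}2)\geq\pi/(2d)$, which follows from the Wendel--Gautschi bound $\Gamma(\tfrac d2)/\Gamma(\tfrac{d+1}2)\geq\sqrt{2/d}$ for all $d\geq2$. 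The comparison with the paper is instructive: the paper writes $\mu_1$ in spherical coordinates and bounds all trigonometric factors in the denominator by one, which amounts to $\sum_j\lambda_jz_j^2\leq\Lambda\norm{z}^{2}$, and then evaluates the angular integrals by Wallis-type formulas. But the formula it invokes, $\int_0^\pi\sin^{m}\phi\cos^{2}\phi\,\dd\phi=\Gamma(\tfrac{m+1}2)\Gamma(\tfrac32)/\Gamma(\tfrac{m+3}2)$, has the wrong denominator (the correct value is $\Gamma(\tfrac{m+1}2)\Gamma(\tfrac32)/\Gamma(\tfrac m2+2)$; test $m=0$), and with the correct formula the paper's own estimate produces exactly $\EE\bigl(z_1^{2}/(\Lambda\norm{z}^{2})\bigr)=\tfrac1{d\Lambda}$, i.e. strictly less than the constant stated in the lemma. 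So your Cauchy--Schwarz argument is not only a genuinely different route; of the two, it is the one that actually establishes the stated bound.
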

\begin{proof}
  For $i,k\in \left\{ 1,\dots,d \right\}$ consider
  \begin{align*}
    (U^{T}MU)_{i,k}  = u_{i}^{T}Mu_{k} = \EE \left( \tfrac{\scp{u_{i}}{x}\scp{u_{k}}{x}}{\sum\limits_{j=1}^d\lambda_{j}\scp{u_{j}}{x}^{2}} \right).
  \end{align*}
  The random variables $\scp{u_{i}}{x}$ and $\scp{u_{k}}{x}$ are uncorrelated since $\EE(\scp{u_{i}}{x}\scp{u_{k}}{x}) = u_{i}^{T}\EE(xx^{T})u_{k} = \scp{u_{i}}{u_{k}}$ and have zero mean. Since the denominator $\sum_{j=1}^d\lambda_{j}\scp{u_{j}}{x}^{2}$ on contains squares of inner products, it is independent of the signs of $\scp{u_{i}}{x}$. This shows that $(U^{T}MU)_{i,k} = 0$ for $i\neq k$. The formula for $\mu_{i}$ follows from the case $i=k$.

  For the upper estimate we denote $z_{i} =\scp{u_{i}}{x}$ and note that $z_{i}\sim\cN(0,1)$. We use the inequality for the arithmetic and geometric mean
  \begin{align*}
    \lambda_{1}z_{1}^{2} + \cdots + \lambda_{d}z_{d}^{2} \geq d \left( \lambda_{1}z_{1}^{2} \cdot \cdots \cdot \lambda_{d}z_{d}^{2} \right)^{1/d}.
  \end{align*}
  To estimate $\mu_{i}$ we take, without loss of generality, $i=1$ (since in the following derivation we do not assume that the values $\lambda_{k}$ are ordered decreasingly) and have 
\begin{align*}
  \mu_{1} &= \tfrac1{(2\pi)^{d/2}}\int\limits_{\RR^{d}}^{}\frac{z_{1}^{2}}{\lambda_{1}z_{1}^{2} + \cdots + \lambda_{d}z_{d}^{2}}e^{-\norm{z}^{2}/2}\dd z\\
  & \leq \frac1{d(\lambda_{1}\cdot\cdots\cdot\lambda_{d})^{1/d}} \frac{1}{(2\pi)^{d/2}}\int_{\RR} (z_{1}^{2})^{1-1/d}e^{-z_{1}^{2}/2}\dd z_{1} \prod\limits_{j=2}^{d}\int_{\RR}(z_{j}^{2})^{-1/d}e^{-z_{j}^{2}/2}\dd z_{j}.
\end{align*}
We use the identity $\int_{\RR}(z^{2})^{a}e^{-z^{2}/2}\dd z = 2^{a+\tfrac12}\Gamma(a+\tfrac12)$ (valid for $a>-\tfrac12$) and obtain 
\begin{align*}
  \mu_{1}& \leq \frac1{d(\lambda_{1}\cdot\cdots\cdot\lambda_{d})^{1/d}} \frac{1}{(2\pi)^{d/2}}2^{\tfrac32-\tfrac1d}\Gamma(\tfrac32-\tfrac1d)\left( 2^{\tfrac12-\tfrac1d}\Gamma(\tfrac12-\tfrac1d) \right)^{d-1} \\
  & = \frac1{d(\lambda_{1}\cdot\cdots\cdot\lambda_{d})^{1/d}}\frac{1}{\pi^{d/2}}\Gamma(\tfrac32-\tfrac1d)\Gamma(\tfrac12-\tfrac1d)^{d-1}.
\end{align*}

Now we turn to the lower estimate. We write the integral in $d$-dimensional spherical coordinates as 
\begin{align*}
  \mu_{1}& = \tfrac1{(2\pi)^{d/2}}\int\limits_{\RR^{d}}^{}\frac{z_{1}^{2}}{\lambda_{1}z_{1}^{2} + \cdots + \lambda_{d}z_{d}^{2}}e^{-\norm{z}^{2}/2}\dd z\\
  & = \tfrac1{(2\pi)^{d/2}}\!\! \int\!\!\tfrac{\cos(\phi_{1})^{2}e^{-r^{2}/2}r^{d-1}\sin(\phi_{1})^{d-1}\sin(\phi_{2})^{d-2}\cdots\sin(\phi_{d-2})}{\lambda_{1}\cos(\phi_{1})^{2} + \lambda_{2}\sin(\phi_{1})^{2}\cos(\phi_{2})^{2} + \cdots + \lambda_{d}\sin(\phi_{1})^{2}\cdots\sin(\phi_{d-1})^{2}}\!\!\dd\! r\!\dd\! \phi_{1}\!\cdots\!\dd\! \phi_{d-1}.
\end{align*}
where the integral in the second line is taken over $(r,\phi_{1},\dots,\phi_{d-1}) \in \RR\times [0,\pi] \times\cdots\times [0,\pi]\times[0,2\pi]$.
We estimate all trigonometric functions in the denominator by $1$ and get 
\begin{align*}
  \mu_{1}&\geq \tfrac1{(2\pi)^{d/2}\sum\limits_{j=1}^{d}\lambda_{j}} \int\limits_0^{\infty}r^{d-1}e^{-r^{2}/2}\dd r \int\limits_0^{\pi}\cos(\phi_{1})^{2}\sin(\phi_{1})^{d-2}\dd \phi_{1} \int\limits_0^{\pi}\sin(\phi_{2})^{d-3}\dd\phi_{2}\cdots\\
  & \qquad \cdots\int\limits_0^{\pi}\sin(\phi_{d-2})\dd\phi_{d-2} \int\limits_0^{2\pi}\dd\phi_{d-1}.
\end{align*}
We use the identities 
\begin{align*}
  \int\limits_0^{\infty}r^{d-1}e^{-r^2/2}\dd r & = 2^{(d-2)/2}\Gamma(\tfrac{d}2),&
  \int\limits_0^{\pi} \sin(\phi)^{m}\dd\phi & = \frac{\Gamma(\tfrac{m+1}{2})\Gamma(\tfrac12)}{\Gamma(\tfrac{m+2}{2})},\\
  \int\limits_0^{\pi} \sin(\phi)^{m}\cos(\phi)^{2}\dd \phi & = \frac{\Gamma(\tfrac{m+1}{2})\Gamma(\tfrac32)}{\Gamma(\tfrac{m+3}{2})}
\end{align*}
and get (with $\Gamma(\tfrac12) = \sqrt{\pi}$ and $\Gamma(\tfrac32) = \sqrt{\pi}/2$)
\begin{align*}
  \mu_{1}& \geq \tfrac1{(2\pi)^{d/2}\sum\limits_{j=1}^{d}\lambda_{j}} 2^{(d-2)/2}\Gamma(\tfrac{d}2) \tfrac{\Gamma(\tfrac{d-1}2)\Gamma(\tfrac32)}{\Gamma(\tfrac{d+1}2)}\cdot\tfrac{\Gamma(\tfrac{d-2}2)\Gamma(\tfrac12)}{\Gamma(\tfrac{d-1}2)}\cdot\cdots\cdot\tfrac{\Gamma(\tfrac32)\Gamma(\tfrac12)}{\Gamma(2)}\tfrac{\Gamma(1)\Gamma(\tfrac12)}{\Gamma(\tfrac32)}2\pi\\
  & = \frac{2^{(d-2)/2}\Gamma(\tfrac{d}2)}{(2\pi)^{d/2}\sum_{j=1}^{d}\lambda_{j}}\frac{\Gamma(\tfrac32)\Gamma(\tfrac12)^{d-3}}{\Gamma(\tfrac{d+1}{2})}2\pi = \frac{1}{2(\lambda_{1} + \cdots + \lambda_{d})}\frac{\Gamma(\tfrac{d}2)}{\Gamma(\tfrac{d+1}{2})}.
\end{align*}
\end{proof}

\begin{remark}[Approximate bounds for the normal distribution, reduction of the dependence on the dimension]
  The lower and upper estimates in Lemma~\ref{lem:ev-estimates-M-normal-dist} are difficult to interpret. To make them easier we note that for large $d$
\begin{align*}
  \frac{\Gamma(\tfrac{d}2)}{\Gamma(\tfrac{d+1}{2})} & \approx \frac{1}{\sqrt{2d}}, & 
                                                                      \Gamma(\tfrac32 - \tfrac1d) & \approx \Gamma(\tfrac32) = \frac{\sqrt{\pi}}{2}, &
                                                                                                                                     \Gamma(\tfrac12 - \tfrac1d) \approx \Gamma(\tfrac12) = \sqrt{\pi}
\end{align*}
and thus $\Gamma(\tfrac32-\tfrac1d)\Gamma(\tfrac12-\tfrac1d)^{d-1}\approx \tfrac{\pi^{d/2}}2$.
With these approximation we have approximate bounds
\begin{align}\label{eq:approx-bounds-M-normal}
\frac1{2\sqrt{2d}\sum_{j=1}^{d}\lambda_{j}} I_{d}\precapprox M \precapprox \frac{1}{2d(\lambda_{1}\cdot\cdots\cdot\lambda_{d})^{1/d}}I_{d},
\end{align}
where $\lambda_{i} = \sigma_{i}(A)^{2}$ are the squares of the singular values of $A$. Especially, we get the bound $\lambda_{\min}(M)\geq (\sqrt{8d}\sum_{j} \lambda_{j})^{-1}$ and since $\norm{A}_{F}^{2} = \sum_{j} \lambda_{j}$ we get from equation~\eqref{eq:conv-residual-rd-d-leq-m}
\begin{align*}
\EE(\norm{Av^{k+1}-b}^2) \leq \left( 1- \tfrac{\sigma_{\min}(A)^{2}}{\sqrt{8d}\norm{A}_{F}^{2}} \right)\EE(\norm{Av^k-b}^{2})
\end{align*}
Likewise we get from~\eqref{eq:conv-ls-residual} that  
\begin{align*}
\min_{0\leq k\leq N-1}\norm{A^{T}(Av^{k}-b)}^{2} \leq \tfrac{\sqrt{8d}\norm{A}_{F}^2\norm{b}^{2}}{N}.
\end{align*}
\end{remark}

We remark that the estimates on $M$ are loose and that in practice one usually observes faster convergence. We will do a refined analysis in the next section and show how the iterates ``converge along singular vectors''. 

At the end of this section we have a closer look at the choice of random coordinate vectors.

\begin{lemma}[Estimates for random coordinate vectors]\label{lem:eigenvalue-estimates-M-coordinate}
  Let $x$ be a random coordinate vector, i.e. $x = \sqrt{d}e_{k}$ with $k$ being selected uniformly at random from $\left\{ 1,\dots,d \right\}$. Then it holds that 
\begin{align*}
M = \EE \left( \tfrac{xx^{T}}{\norm{Ax}^{2}} \right) =  \tfrac1d \diag(\norm{a_{1}}^{-2},\dots,\norm{a_{d}}^{-2})
\end{align*}
and consequently
\begin{align*}
\frac{1}{d\max\limits_{j=1\dots,d}\norm{a_{j}}^{2}}I_{d}\mleq M \mleq \frac{1}{d\min\limits_{j=1,\dots,d}\norm{a_{j}}^{2}}.
\end{align*}
\end{lemma}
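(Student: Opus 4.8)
The plan is to compute $M$ directly by exploiting the discrete structure of the random coordinate distribution; there is no need for any of the spectral machinery used in the previous lemmas. First I would substitute $x=\sqrt{d}e_{k}$ into the defining expression~\eqref{eq:def-M} for $M$. Since $xx^{T}=d\,e_{k}e_{k}^{T}$ is the matrix whose only nonzero entry is $d$ in position $(k,k)$, and since $Ax=\sqrt{d}\,Ae_{k}=\sqrt{d}\,a_{k}$ where $a_{k}$ denotes the $k$th column of $A$, we get $\norm{Ax}^{2}=d\norm{a_{k}}^{2}$. The two factors of $d$ cancel, so for each fixed $k$ the random matrix reduces to $\tfrac{xx^{T}}{\norm{Ax}^{2}}=\tfrac{e_{k}e_{k}^{T}}{\norm{a_{k}}^{2}}$.

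Second, I would take the expectation over the uniform choice of $k\in\{1,\dots,d\}$, which is merely an average of $d$ terms:
\begin{align*}
M = \sum_{k=1}^{d}\tfrac1d\,\tfrac{e_{k}e_{k}^{T}}{\norm{a_{k}}^{2}} = \tfrac1d\diag(\norm{a_{1}}^{-2},\dots,\norm{a_{d}}^{-2}),
\end{align*}
because $\sum_{k}e_{k}e_{k}^{T}/\norm{a_{k}}^{2}$ is exactly the diagonal matrix with entries $\norm{a_{k}}^{-2}$. This establishes the first claim, under the implicit requirement (cf.\ Remark~\ref{rem:M-matrix}) that every column of $A$ is nonzero, which is precisely the condition guaranteeing that the expectation is finite for this discrete distribution.

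Finally, the Loewner bounds follow immediately from the fact that $M$ is diagonal: its eigenvalues are precisely its diagonal entries $\tfrac1{d\norm{a_{j}}^{2}}$, so the smallest and largest columns of $A$ control them, giving $\lambda_{\min}(M)=\tfrac1{d\max_{j}\norm{a_{j}}^{2}}$ and $\lambda_{\max}(M)=\tfrac1{d\min_{j}\norm{a_{j}}^{2}}$. Replacing each diagonal entry by these extreme values and using that a diagonal matrix with entries bounded between two scalars is sandwiched between the corresponding multiples of $I_{d}$ yields the claimed inequalities in the Loewner order.

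I expect no genuine obstacle here: the argument is entirely elementary once the two cancelling factors of $d$ are tracked carefully, and the only point requiring a word of caution is the existence of the expectation, which holds exactly when no column of $A$ vanishes.
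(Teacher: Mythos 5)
Your proof is correct and takes essentially the same route as the paper's: substitute $x=\sqrt{d}e_{k}$, observe that the two factors of $d$ in $xx^{T}=d\,e_{k}e_{k}^{T}$ and $\norm{Ax}^{2}=d\norm{a_{k}}^{2}$ cancel, average over the uniform choice of $k$ to obtain the diagonal matrix, and read off the Loewner bounds from the diagonal entries. Your explicit caveat that the expectation exists precisely when no column of $A$ vanishes is a sound addition consistent with Remark~\ref{rem:M-matrix}, which the paper leaves implicit in this lemma.
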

\begin{proof}
  We denote the columns of $A$ by $a_{i}$, $i=1,\dots,d$, and the $\norm{Ax}_{2} = \sqrt{d}\norm{a_{k}}_{2}$ with probability $1/d$ and it directly follows
\begin{align*}
\EE \left( \tfrac{xx^{T}}{\norm{Ax}^{2}} \right) = \tfrac1d \sum\limits_{k=1}^d\tfrac{e_{k}e_{k}^{T}}{\norm{Ae_{k}}^{2}} = \tfrac1d  \diag(\norm{a_{1}}^{-2},\dots,\norm{a_{d}}^{-2}).
\end{align*}
\end{proof}

We can also consider sampling of $x$ from non-isotropic distributions, which brings an additional degree of freedom to the setup of the algorithm. However, it is in general hard to come up with sampling schemes that provably improve the convergence rate and are simple to implement in practice.
\begin{example}
 In the case of random coordinate vectors we can consider sampling $x = \sqrt{d}e_{k}$ with probability $p_{k}$ and get the expectation 
\begin{align*}
M = \EE(\tfrac{xx^{T}}{\norm{Ax}^{2}}) =\diag(p_{1}\norm{a_{1}}^{-2},\dots,p_{d}\norm{a_{d}}^{-2}).
\end{align*}
For the special choice $p_{k} = \norm{a_{k}}^{2}/\norm{A}_{F}^{2}$ (know from the randomized Kaczmarz method~\cite{strohmer2009kaczmarz}) we obtain 
\begin{align*}
M = \tfrac1{\norm{A}_{F}^{2}}I_{d}
\end{align*}
which leads to the rate 
\begin{align*}
\EE(\norm{Av^{k+1}-b}^2) \leq \left( 1- \tfrac{\sigma_{\min}(A)^{2}}{\norm{A}_{F}^{2}} \right)\EE(\norm{Av^k-b}^{2})
\end{align*}
which is known from~\cite{leventhal2010randomized} (see also~\cite{gower2015randomized}).
\end{example}

\section{Ill posed problems and convergence along singular vectors}
\label{sec:illposed}

In this section we will analyze the behavior of the quantities $\scp{v^{k}-\hat v}{u_{i}}$ for iterates $v^{k}$ and right singular vectors $u_{i}$ of $A$. These quantities have been studied in \cite{steinerberger2021randomized} (and previous results in a similar direction can be found in~\cite{jiao2017preasymptotic}) for the randomized Kaczmarz iteration and it has been shown that their expectation vanishes at different rates. We will analyze the noisy case, i.e. the case where $A\hat v = b$  but the iteration is run with the right hand side $b+r$. As a baseline for comparison, we first analyze the iterates of the Landweber iteration.
\begin{theorem}\label{thm:conv-singvec-lw}
  Let $A\in\RR^{d\times m}$ with right and left singular vectors $(u_{i})$ and $(w_{i})$, respectively, and singular values $\sigma_{i}$. Moreover let $b\in\RR^{m}$ and $\hat{v}\in\RR^{d}$ with $A\hat{v}= b$ and $\tilde b =b+r$, $v^{0}\in\RR^{d}$ and $v^{k}$ defined by 
\begin{align*}
v^{k+1} = v^{k} - \omega A^{T}(Av^{k}-\tilde b)
\end{align*}
for some stepsize $\omega>0$.
Then it holds that 
\begin{align*}
\scp{v^{k}-\hat{v}}{u_{i}} = (1 - \omega\sigma_{i}^{2})^{k}\scp{v^{0}-\hat{v}}{u^{i}} + \tfrac{1-(1-\omega\sigma_{i}^{2})^{k}}{\sigma_{i}}\scp{r}{w_{i}} .
\end{align*}
\end{theorem}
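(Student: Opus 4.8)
The plan is to reduce the whole statement to a single scalar affine recursion for the coordinate $a_k \vcentcolon= \scp{v^k-\hat{v}}{u_i}$. First I would introduce the error vector $e^k = v^k-\hat{v}$ and use the identity $Av^k-\tilde b = A(v^k-\hat{v})-r = Ae^k-r$, which holds because $A\hat{v}=b$ and $\tilde b = b+r$. Subtracting $\hat{v}$ from the iteration $v^{k+1}=v^k-\omega A^T(Av^k-\tilde b)$ then produces the vector recursion $e^{k+1} = (I-\omega A^TA)e^k + \omega A^Tr$.

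Next I would project this recursion onto the right singular vector $u_i$. Using that $u_i$ is an eigenvector of the symmetric matrix $A^TA$ with eigenvalue $\sigma_i^2$ (indeed $A^TAu_i = A^T(\sigma_i w_i) = \sigma_i^2 u_i$), the self-adjointness of $I-\omega A^TA$ yields $\scp{(I-\omega A^TA)e^k}{u_i} = (1-\omega\sigma_i^2)\scp{e^k}{u_i}$. For the inhomogeneous term I would use the singular value relation $Au_i=\sigma_i w_i$, so that $\scp{A^Tr}{u_i} = \scp{r}{Au_i} = \sigma_i\scp{r}{w_i}$. Collecting these gives the scalar recursion
\[
a_{k+1} = (1-\omega\sigma_i^2)\,a_k + \omega\sigma_i\scp{r}{w_i}.
\]

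Finally I would solve this first-order linear recursion explicitly. With $q = 1-\omega\sigma_i^2$ and constant forcing term $p = \omega\sigma_i\scp{r}{w_i}$, unrolling gives $a_k = q^k a_0 + p\sum_{j=0}^{k-1}q^j = q^k a_0 + p\,\tfrac{1-q^k}{1-q}$ by the geometric sum. Since $1-q = \omega\sigma_i^2$, the quotient $p/(1-q)$ simplifies to $\scp{r}{w_i}/\sigma_i$, and substituting $a_0 = \scp{v^0-\hat{v}}{u_i}$ reproduces exactly the claimed closed form.

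The computation is entirely routine and I do not foresee any genuine obstacle beyond bookkeeping. The only points needing a little care are that the formula presupposes $\sigma_i>0$, so that division by $\sigma_i$ is legitimate and $(u_i,w_i)$ is a genuine singular pair, and that the degenerate case $q=1$ never arises since $\omega\sigma_i^2>0$; one may also check directly that the closed form stays valid in the boundary case $\omega\sigma_i^2=1$, where $q=0$ and both sides reduce to $\scp{r}{w_i}/\sigma_i$ for $k\geq 1$.
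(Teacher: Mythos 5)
Your proposal is correct and follows essentially the same route as the paper's proof: both reduce the claim to the scalar affine recursion $\scp{v^{k+1}-\hat{v}}{u_i} = (1-\omega\sigma_i^2)\scp{v^k-\hat{v}}{u_i} + \omega\sigma_i\scp{r}{w_i}$ via the relations $A^TAu_i=\sigma_i^2u_i$ and $\scp{r}{Au_i}=\sigma_i\scp{r}{w_i}$, and then unroll it with the geometric sum, noting $\omega\sigma_i/(\omega\sigma_i^2)=1/\sigma_i$. The only (immaterial) difference is that you first write the vector error recursion $e^{k+1}=(I-\omega A^TA)e^k+\omega A^Tr$ and then project, whereas the paper projects onto $u_i$ directly; your remarks about $\sigma_i>0$ and the non-degeneracy of the geometric sum are sensible points of care that the paper leaves implicit.
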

\begin{proof}
We calculate  
\begin{align*}
  \scp{v^{k+1}-\hat{v}}{u_{i}} & = \scp{v^{k}-\hat{v}}{u_{i}} - \omega\scp{Av^{k}-b-r}{Au_{i}}\\
                               & =  \scp{v^{k}-\hat{v}}{u_{i}} - \omega\scp{A(v^{k}-\hat{v})}{Au_{i}} + \omega\scp{r}{Au_{i}}\\
                               & =  \scp{v^{k}-\hat{v}}{u_{i}} - \omega\scp{v^{k}-\hat{v}}{A^{T}Au_{i}} + \omega\sigma_{i}\scp{r}{w_{i}}\\
  & = (1-\omega\sigma_{i}^{2})\scp{v^{k}-\hat{v}}{u_i} + \omega\sigma_{i}\scp{r}{w_{i}}.
\end{align*}
Recursively, this gives 
\begin{align*}
  \scp{v^{k}-\hat{v}}{u_{i}} &= (1-\omega\sigma_{i}^{2})^{k}\scp{v^{0}-\hat{v}}{u_{i}} + \sum\limits_{l=0}^{k-1}(1-\omega\sigma_{i}^{2})^{l}\omega\sigma_{i}\scp{r}{w_{i}}\\
                             &  = (1-\omega\sigma_{i}^{2})^{k}\scp{v^{0}-\hat{v}}{u_{i}} + \tfrac{1-(1-\omega\sigma_{i}^{2})^{k}}{\sigma_{i}}\scp{r}{w_{i}} 
\end{align*}
\end{proof}
This theorem shows the typical semiconvergence property of the Landweber method: the contributions $\scp{v^{k}-\hat{v}}{u_{i}}$ of the $i$th singular vector have a contribution from the initial error $v^{0}-\hat{v}$ that decays faster for the larger singular values, and another contribution that does converge to $\scp{r}{w_{i}}/\sigma_{i}$ and which is due to the noise.

In a similar way we get for the iterates of the random descent method:
\begin{theorem}\label{thm:conv-singvec-rd}
  
  Let $A\in\RR^{d\times m}$ with right and left singular vectors $(u_{i})$ and $(w_{i})$, respectively, and singular values $\sigma_{i}$. Moreover let $b\in\RR^{m}$ and $\hat{v}\in\RR^{d}$ with $A\hat{v}= b$ and $\tilde b =b+r$, $v^{0}\in\RR^{d}$ and let $\mu_{i}$ be the eigenvalues of the matrix $M = \EE \left( \tfrac{xx^{T}}{\norm{Ax}^{2}} \right)$. Then the iterates $v^{k}$ be the iterates of Algorithm~\ref{alg:rd} with $\tilde b$ instead of $b$ fulfill
\begin{align*}
\EE \left(\scp{v^{k}-\hat{v}}{u_{i}} \right) = (1 - \mu_{i}\sigma_{i}^{2})^{k}\scp{v^{0}-\hat{v}}{u^{i}} + \tfrac{1-(1-\mu_{i}\sigma_{i}^{2})^{k}}{\sigma_{i}}\scp{r}{w_{i}},
\end{align*}
where the expectation is taken with respect to the random choice of $x$ in the $k$th step.
\end{theorem}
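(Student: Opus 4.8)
The plan is to follow the template of the Landweber computation in Theorem~\ref{thm:conv-singvec-lw}: set $e^{k} = v^{k}-\hat{v}$, project the update onto a fixed right singular vector $u_{i}$, and derive a scalar linear recursion for the expected coefficient $\EE(\scp{e^{k}}{u_{i}})$. The only genuine difference from the Landweber case is that the deterministic step $-\omega A^{T}(\cdot)$ is replaced by a random rank-one step, so the matrix $M = \EE(\tfrac{xx^{T}}{\norm{Ax}^{2}})$ will take over the role that $\omega I$ played before.

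First I would write the random descent update with the noisy right hand side $\tilde b = b+r$ as
\begin{align*}
e^{k+1} = e^{k} - \tfrac{\scp{Ae^{k}-r}{Ax}}{\norm{Ax}^{2}}x,
\end{align*}
using $Av^{k}-\tilde b = A(v^{k}-\hat{v})-r = Ae^{k}-r$. Taking the inner product with $u_{i}$ and expanding $\scp{Ae^{k}-r}{Ax} = (e^{k})^{T}A^{T}Ax - r^{T}Ax$ gives
\begin{align*}
\scp{e^{k+1}}{u_{i}} = \scp{e^{k}}{u_{i}} - (e^{k})^{T}A^{T}A\tfrac{xx^{T}}{\norm{Ax}^{2}}u_{i} + r^{T}A\tfrac{xx^{T}}{\norm{Ax}^{2}}u_{i}.
\end{align*}
Because $e^{k}$ and $r$ do not depend on the draw of $x$ in the $k$th step, taking the conditional expectation over that draw pulls $M$ out of the two fractions, yielding $\EE(\scp{e^{k+1}}{u_{i}}\mid e^{k}) = \scp{e^{k}}{u_{i}} - (e^{k})^{T}A^{T}AMu_{i} + r^{T}AMu_{i}$.

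The key step is then to invoke the simultaneous diagonalizability of $A^{T}A$ and $M$ established in Lemma~\ref{lem:ev-estimates-M-normal-dist}, which gives $Mu_{i} = \mu_{i}u_{i}$ for the shared eigenbasis $(u_{i})$. With $A^{T}Au_{i} = \sigma_{i}^{2}u_{i}$ and $Au_{i} = \sigma_{i}w_{i}$ this collapses the two correction terms to $(e^{k})^{T}A^{T}AMu_{i} = \mu_{i}\sigma_{i}^{2}\scp{e^{k}}{u_{i}}$ and $r^{T}AMu_{i} = \mu_{i}\sigma_{i}\scp{r}{w_{i}}$. Applying the law of total expectation produces exactly the scalar recursion
\begin{align*}
\EE(\scp{e^{k+1}}{u_{i}}) = (1-\mu_{i}\sigma_{i}^{2})\EE(\scp{e^{k}}{u_{i}}) + \mu_{i}\sigma_{i}\scp{r}{w_{i}},
\end{align*}
which is identical in form to the one in Theorem~\ref{thm:conv-singvec-lw} with $\omega$ replaced by $\mu_{i}$. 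Finally I would unroll it as a geometric series: the homogeneous part contributes $(1-\mu_{i}\sigma_{i}^{2})^{k}\scp{e^{0}}{u_{i}}$ and the inhomogeneous part sums to $\tfrac{1-(1-\mu_{i}\sigma_{i}^{2})^{k}}{\mu_{i}\sigma_{i}^{2}}\mu_{i}\sigma_{i}\scp{r}{w_{i}} = \tfrac{1-(1-\mu_{i}\sigma_{i}^{2})^{k}}{\sigma_{i}}\scp{r}{w_{i}}$, giving the claimed formula.

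The main obstacle, and the one place where this is more than a rote copy of the Landweber computation, is the reduction $Mu_{i}=\mu_{i}u_{i}$: it is precisely the shared eigenbasis of $M$ and $A^{T}A$ that lets the $i$th mode decouple and pairs the eigenvalue $\mu_{i}$ with the singular value $\sigma_{i}$. This is why the statement is naturally read in the setting of Lemma~\ref{lem:ev-estimates-M-normal-dist} (normal or spherical sampling), where the simultaneous diagonalization holds; one also tacitly needs $\sigma_{i}>0$ (and hence $\mu_{i}>0$) so that the geometric sum and the division by $\sigma_{i}$ are legitimate.
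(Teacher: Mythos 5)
Your proof is correct and is exactly the argument the paper intends: the paper states this theorem with no separate proof, introducing it only with ``in a similar way,'' and your derivation is precisely the Landweber computation of Theorem~\ref{thm:conv-singvec-lw} with the deterministic step $\omega A^{T}(\cdot)$ replaced by the random rank-one step whose conditional expectation produces $M$, followed by the same geometric-series unrolling. You also correctly make explicit the hypothesis the paper leaves tacit, namely that $Mu_{i}=\mu_{i}u_{i}$ (the shared eigenbasis of $M$ and $A^{T}A$ from Lemma~\ref{lem:ev-estimates-M-normal-dist}) and that $\sigma_{i}>0$, without which the pairing of $\mu_{i}$ with $\sigma_{i}$ and the division by $\sigma_{i}$ would not be justified.
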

Comparing Theorems~\ref{thm:conv-singvec-lw} and~\ref{thm:conv-singvec-rd} we observe that in the case of random descent we get a factor $\mu_{i}$ which depends on $i$ instead of $\omega$. For the Landweber iteration in Theorem~\ref{thm:conv-singvec-lw} one takes $0<\omega<2/\norm{A}^{2}$ and we see that if $\mu_{i}>\omega$ we have that both terms on the right hand side of the estimate converge faster (the first towards zero and the second increases towards $\scp{r}{w_i}/\sigma_{i}$) and thus, the semiconvergence happens faster for random descent than for the Landweber iteration. We will see the practical implications of this result in numerical experiments in Section~\ref{sec:ill-posed-problems}.

\section{Experiments}
\label{sec:experiment}

\subsection{Comparison with other solvers}
\label{sec:comparison}

In this section we compare SGDAS and RD with other solvers for linear system $Av=b$ that meet our requirements, i.e. only with solvers that only need forward evaluations of $A$ and do not need to store a larger number of vectors, namely with TFQMR~(\cite{freund1993tfqmr}) and CGS~(\cite{sonneveld1989cgs}). Note that GMRES also only needs forward evaluations of $A$, but constructs an orthonormal basis that grows with the number of iterations and hence, does not meet our criteria. Both TFQMR and CGS are designed for square systems but do not assume further structure of the operator $A$. However, we can turn both over- and underdetermined systems into square systems by adding rows or columns. Although this is pretty straightforward, we include the details for completeness:\medskip

\begin{description}
\item[Underdetermined systems:] If $A\in\RR^{m\times d}$ and $b\in\RR^{m}$ with $m< d$ we define 
\begin{align*}
\tilde A ;=
  \begin{pmatrix}
    A\\0
  \end{pmatrix}\in\RR^{d\times d},\quad \tilde b :=
  \begin{pmatrix}
    b\\0
  \end{pmatrix}\in\RR^{d}.
\end{align*}
Then $v$ solves $\tilde A v = \tilde b$ exactly if it solves $Av=b$.
\item[Overdetermined systems:] If $m>d$ we define
\begin{align*}
\tilde A :=
  \begin{pmatrix}
    A & 0
  \end{pmatrix}\in\RR^{m\times m}
\end{align*}
and have that a vector $\tilde v = (v^{T}, w^{T})^{T}$ solves $\tilde A\tilde v = b$ exactly if $v$ solves $Av = b$. If the system $Av=b$ is inconsistent we still have that $\tilde v$ solves $\tilde A^{T}A\tilde v = \tilde A^{T}b$ exactly if $v$ solves $A^{T}Av = A^{T}b$.
\end{description}
\medskip

We implemented SGDAS and RD in Python using numpy and scipy. We generated several sparse random $m\times d$ matrices $A$, corresponding solutions $\hat{v}$ and right hand sides $b = A\hat{v}$ to obtain consistent linear systems. We let the methods SGDAS (Algorithm~\ref{alg:sgd}) and RD (Algorithm~\ref{alg:rd}) run with different isotropic random vectors until a specified relative tolerance $\norm{Av-b}/\norm{b}$ was reached or a maximum number of iterations of 10000 has been reached. As a comparison we called TFQMR and CGS from scipy with the same tolerance and maximal number of iterations. In Table~\ref{tab:experiment1a} we collect the results for several sizes and densities of $A$ and a fairly large tolerance of $10^{-2}$ and Table~\ref{tab:experiment1b} gives results for smaller matrices $A$ and a smaller tolerance of $10^{-5}$.

Furthermore, we tested RD on least squares problems from the SuiteSparse Matrix Collection~\cite{davis2011suitesparse}. We used matrices of different sizes and let RD, TFQMR and CGS run for $10\cdot\max(m,d)$ iterations or until a tolerance of  $10^{-2}$ is reached. We did not include spherical sampling in this case since the iterates are the same as for normal sampling (as the update is zero homogeneous and the spherical uniform distribution is the normalized normal distribution). We report the final relative residual and runtime in Table~\ref{tab:suitesparse}.

Notably, RD always works and is reasonably fast. Sometimes it is even in faster than TFQMR and CGS. Moreover, TFQMR and CGS sometimes fail dramatically (actually, they fail on most non-square problems). Moreover, as expected, SGDAS is always slower than RD. In conclusion, RD is a reliable and comparable fast method to minimize least squares functionals under the computational constraints that we consider in this paper. Also the distribution of the random $x$ seems to play a role, but our experiments do not allow for a clear conclusion in this regard.

\begin{table}[htb]
  \centering
  \footnotesize
  \subfloat[Size of $A$: $300\times1200$, density of $A$: $0.1$.]{\begin{tabular}{llrrr}
\toprule
 & & $\tfrac{\norm{Av-b}}{\norm{b}}$ & $\norm{v}$ & time (s)\\
\midrule
Rademacher & SGDAS   &     9.47e-01 &    6.69e-01 & 2.60e+00  \\
 & RD      &     1.00e-02 &    3.42e+01 & 1.04e+00  \\
Coordinate & SGDAS   &     1.00e+00 &    6.58e-04 & 1.96e+00  \\
 & RD      &     9.98e-03 &    3.53e+01 & 8.43e-01  \\
Spherical & SGDAS   &     1.00e+00 &    6.63e-04 & 2.63e+00  \\
 & RD      &     9.97e-03 &    3.41e+01 & 1.15e+00  \\
Normal & SGDAS   &     9.46e-01 &    6.82e-01 & 2.65e+00  \\
 & RD      &     9.98e-03 &    3.38e+01 & 9.77e-01  \\
TFQMR &  &     3.79e+00 &    4.06e+02 & 1.49e+00  \\
CGS   &  &     1.10e+04 &    5.08e+05 & 2.41e+00  \\
\bottomrule
\end{tabular}
}
  
  \subfloat[Size of $A$: $1200\times300$, density of $A$: $0.1$.]{\begin{tabular}{llrrr}
\toprule
 & & $\tfrac{\norm{Av-b}}{\norm{b}}$ & $\norm{v}$ & time (s)\\
\midrule
Rademacher & SGDAS   &     8.69e-01 &    1.97e+00 & 2.42e+00  \\
 & RD      &     9.97e-03 &    1.74e+01 & 1.00e+00  \\
Coordinate & SGDAS   &     9.99e-01 &    8.54e-03 & 2.05e+00  \\
 & RD      &     9.56e-03 &    1.74e+01 & 9.27e-01  \\
Spherical & SGDAS   &     9.99e-01 &    8.52e-03 & 2.20e+00  \\
 & RD      &     9.98e-03 &    1.74e+01 & 8.60e-01  \\
Normal & SGDAS   &     8.69e-01 &    1.96e+00 & 2.09e+00  \\
 & RD      &     9.99e-03 &    1.74e+01 & 8.62e-01  \\
TFQMR &  &     3.16e+00 &    2.78e+02 & 1.44e+00  \\
CGS   &  &     1.96e+01 &    9.89e+02 & 2.36e+00  \\
\bottomrule
\end{tabular}
}

  \subfloat[Size of $A$: $600\times600$, density of $A$: $0.5$.]{\begin{tabular}{llrrr}
\toprule
 & & $\tfrac{\norm{Av-b}}{\norm{b}}$ & $\norm{v}$ & time (s)\\
\midrule
Rademacher & SGDAS   &     9.49e-01 &    6.74e-01 & 8.32e+00  \\
 & RD      &     6.20e-02 &    2.34e+01 & 8.75e+00  \\
Coordinate & SGDAS   &     1.00e+00 &    2.65e-03 & 8.30e+00  \\
 & RD      &     7.79e-02 &    2.25e+01 & 8.38e+00  \\
Spherical & SGDAS   &     1.00e+00 &    2.65e-03 & 8.73e+00  \\
 & RD      &     7.10e-02 &    2.34e+01 & 8.54e+00  \\
Normal & SGDAS   &     9.51e-01 &    6.47e-01 & 8.36e+00  \\
 & RD      &     7.01e-02 &    2.30e+01 & 8.34e+00  \\
TFQMR &  &     4.00e+00 &    1.46e+02 & 4.76e+00  \\
CGS   &  &     1.64e+03 &    7.17e+04 & 8.72e+00  \\
\bottomrule
\end{tabular}
}

  \caption{Comparison of SGDAS, RD, TFQMR and CGS. Stopped at relative tolerance of $10^{-2}$ or after 10000 iterations. Random sparse matrices $A$ with normally distributed entries and random solution vectors $\hat v$ with normally distributed entries.}
  \label{tab:experiment1a}
\end{table}

\begin{table}[htb]
  \centering
  \footnotesize
  \subfloat[Size of $A$: $200\times100$, density of $A$: $0.02$.]{\begin{tabular}{llrrr}
\toprule
 & & $\tfrac{\norm{Av-b}}{\norm{b}}$ & $\norm{v}$ & time (s)\\
\midrule
Rademacher & SGDAS   &     1.16e-02 &    1.00e+01 & 2.71e+01  \\
 & RD      &     1.00e-05 &    1.02e+01 & 7.62e+00  \\
Coordinate & SGDAS   &     5.19e-01 &    3.89e+00 & 2.68e+01  \\
 & RD      &     9.86e-06 &    1.02e+01 & 2.79e+00  \\
Spherical & SGDAS   &     5.19e-01 &    3.89e+00 & 3.19e+01  \\
 & RD      &     1.00e-05 &    1.02e+01 & 7.79e+00  \\
Normal & SGDAS   &     1.24e-02 &    1.00e+01 & 2.38e+01  \\
 & RD      &     1.00e-05 &    1.02e+01 & 6.93e+00  \\
TFQMR &  &     1.12e+00 &    1.15e+01 & 3.84e+01  \\
CGS   &  &     2.26e+13 &    4.01e+16 & 4.44e+01  \\
\bottomrule
\end{tabular}
}

  \subfloat[Size of $A$: $150\times100$, density of $A$: $0.1$.]{\begin{tabular}{llrrr}
\toprule
 & & $\tfrac{\norm{Av-b}}{\norm{b}}$ & $\norm{v}$ & time (s)\\
\midrule
Rademacher & SGDAS   &     9.25e-03 &    1.09e+01 & 3.14e+01  \\
 & RD      &     1.00e-05 &    1.10e+01 & 1.88e+00  \\
Coordinate & SGDAS   &     4.86e-01 &    4.54e+00 & 2.80e+01  \\
 & RD      &     9.98e-06 &    1.10e+01 & 1.38e+00  \\
Spherical & SGDAS   &     4.87e-01 &    4.53e+00 & 3.37e+01  \\
 & RD      &     1.00e-05 &    1.10e+01 & 2.16e+00  \\
Normal & SGDAS   &     8.85e-03 &    1.09e+01 & 2.68e+01  \\
 & RD      &     1.00e-05 &    1.10e+01 & 1.69e+00  \\
TFQMR &  &     3.93e-07 &    5.54e+01 & 1.06e-01  \\
CGS   &  &     nan &    nan & 4.69e+01  \\
\bottomrule
\end{tabular}
}

  \caption{Comparison of SGDAS, RD, TFQMR and CGS. Stopped at relative tolerance of $10^{-5}$ or after 500000 iterations. Random sparse matrices $A$ with normally distributed entries and random solution vectors $\hat v$ with normally distributed entries.}
  \label{tab:experiment1b}
\end{table}

\begin{table}[htb]
  \centering
  \footnotesize
  \begin{tabular}{lcclrr}\toprule Name & $m$ & $d$ & Method & $\tfrac{\norm{Av-b}}{\norm{b}}$  & time (s)\\\midrule\verb|ash292| & 292 & 292 & & &\\ & & & RD Rademacher  &     1.09e-01 &     7.12e-01  \\ & & & RD Coordinate  &     1.11e-01 &     8.63e-01  \\ & & & RD Normal  &     1.15e-01 &     8.40e-01  \\ & & & TFQMR &     2.49e-03 &     1.09e-01  \\ & & & CGS   &     9.50e-03 &     2.73e-02  \\\midrule\verb|ash331| & 331 & 104 & & &\\ & & & RD Rademacher  &     1.00e-02 &     2.61e-01  \\ & & & RD Coordinate  &     5.53e-03 &     4.05e-01  \\ & & & RD Normal  &     9.79e-03 &     2.43e-01  \\ & & & TFQMR &     9.92e-01 &     1.09e+00  \\ & & & CGS   &     6.48e+25 &     1.51e+00  \\\midrule\verb|ash608| & 608 & 188 & & &\\ & & & RD Rademacher  &     9.98e-03 &     9.81e-01  \\ & & & RD Coordinate  &     9.66e-03 &     4.23e-01  \\ & & & RD Normal  &     9.99e-03 &     1.50e+00  \\ & & & TFQMR &     1.00e+00 &     3.03e+00  \\ & & & CGS   &     1.90e+26 &     5.40e+00  \\\midrule\verb|illc1033| & 1033 & 320 & & &\\ & & & RD Rademacher  &     2.95e-02 &     5.72e+00  \\ & & & RD Coordinate  &     3.15e-02 &     5.12e+00  \\ & & & RD Normal  &     2.42e-02 &     6.25e+00  \\ & & & TFQMR &     1.12e+00 &     1.36e+01  \\ & & & CGS   &     1.70e+12 &     1.68e+01  \\\midrule\verb|Maragal_2| & 555 & 350 & & &\\ & & & RD Rademacher  &     3.10e-02 &     3.90e+00  \\ & & & RD Coordinate  &     4.04e-02 &     3.38e+00  \\ & & & RD Normal  &     3.19e-02 &     3.56e+00  \\ & & & TFQMR &     1.47e+00 &     4.62e+00  \\ & & & CGS   &     1.88e+07 &     5.02e+00  \\\midrule\verb|Maragal_3| & 1690 & 860 & & &\\ & & & RD Rademacher  &     2.70e-02 &     4.42e+01  \\ & & & RD Coordinate  &     2.08e-02 &     3.41e+01  \\ & & & RD Normal  &     2.63e-02 &     3.48e+01  \\ & & & TFQMR &     1.76e+00 &     6.16e+01  \\ & & & CGS   &     2.83e+04 &     1.27e+02  \\\midrule\verb|Maragal_4| & 1964 & 1034 & & &\\ & & & RD Rademacher  &     2.52e-02 &     6.86e+01  \\ & & & RD Coordinate  &     1.68e-02 &     6.11e+01  \\ & & & RD Normal  &     2.38e-02 &     6.36e+01  \\ & & & TFQMR &     8.99e+00 &     9.63e+01  \\ & & & CGS   &     1.09e+03 &     1.88e+02  \\\bottomrule\end{tabular}
  \caption{Comparison of RD, TFQMR, and CGS on least squares problems from the SuiteSparse Matrix collection.}
  \label{tab:suitesparse}
\end{table}

\subsection{Ill-posed problems}
\label{sec:ill-posed-problems}

In this section we illustrate the effect of the findings from Section~\ref{sec:illposed}. We consider a simple toy example with $d=m=100$, namely the discrete counterpart of ``inverse integration''. The corresponding map $A$ is the cumulative sum, i.e. $(Av)_{i} = \sum_{j=1}^{i}v_{i}$. From Theorems~\ref{thm:conv-singvec-lw} and~\ref{thm:conv-singvec-rd} we see that random descent with normally distributed directions has advantages over the Landweber iteration if the eigenvalues $\mu_{i}$ of $M$ are larger than the stepsize $\omega$ of the Landweber iteration. In Figure~\ref{fig:invint_factors} we plot the values $\omega\sigma_{i}^{2} = \sigma_{i}^{2}/\norm{A}^{2}$ and $\mu_{i}\sigma_{i}^{2}$. Larger values lead to faster semiconvergence and we observe that the values $\mu_i\sigma_{i}^{2}$ are indeed larger for larger indices $i$, i.e. for the smaller singular values. This indicates, that random descent should perform favorably if the initial error $v^{0}-\hat{v}$ has large contributions from singular vectors from small singular values, i.e. for rough solutions. We constructed a rough solution $b$ and corresponding noisy data $\tilde b = b+r$ with little noise (see Figure~\ref{fig:invint_data}).

\begin{figure}
  \centering
  \includegraphics[width=0.5\textwidth]{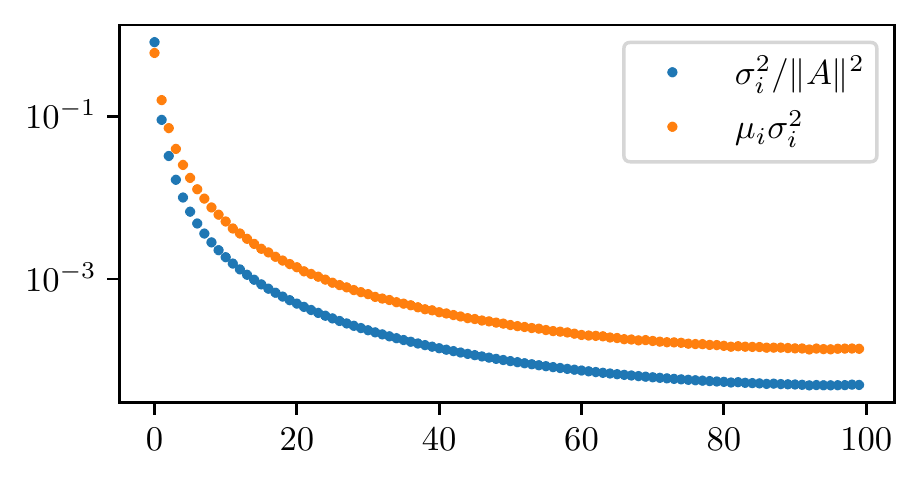}
  \caption{Comparison of factors influencing convergence of the Landweber method ($\sigma_{i}^2/\norm{A}^{2}$) and random descent with normally distributed directions ($\mu_i\sigma_i^2$).}
  \label{fig:invint_factors}
\end{figure}

\begin{figure}
  \centering
  \includegraphics[width=0.7\textwidth]{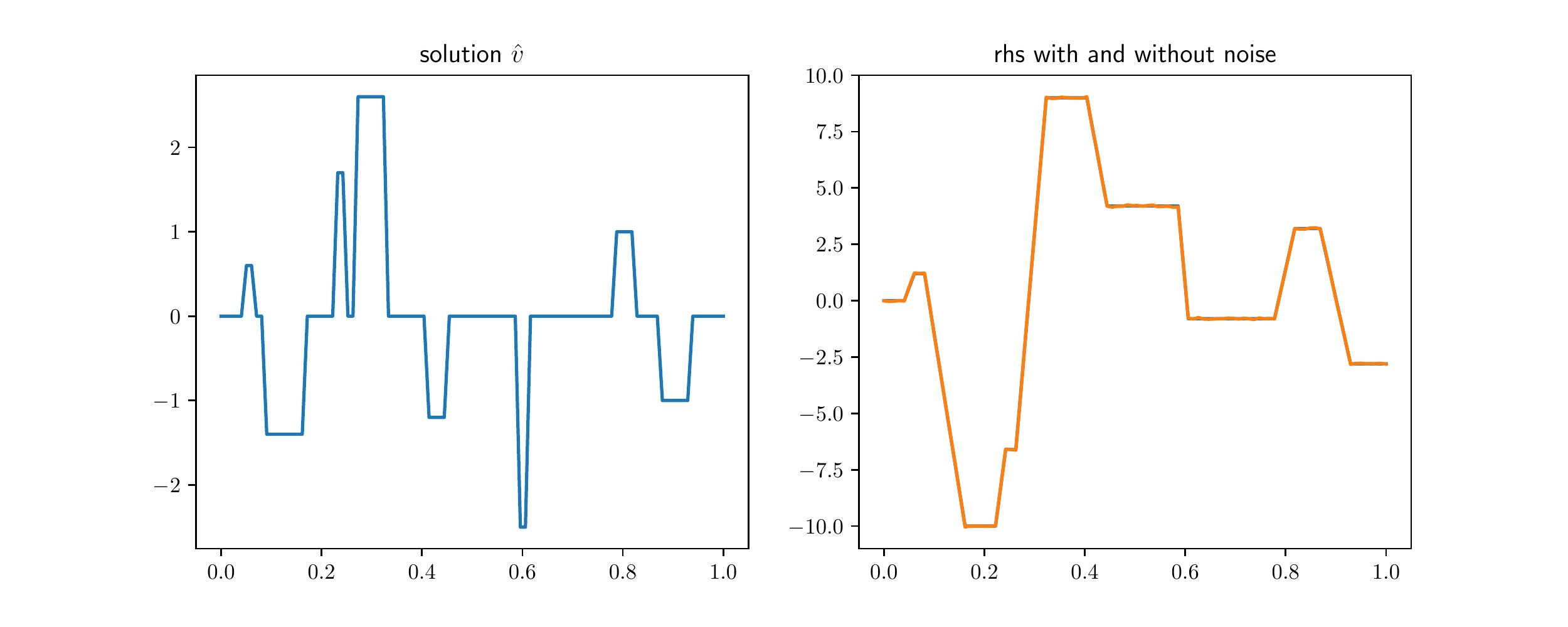}
  \caption{Solution and data for the ill-posed inverse integration example}
  \label{fig:invint_data}
\end{figure}

We have run the Landweber iteration as well as random descent (with different sampling of the direction $x$) and report the decay of residuals and semiconvergence of the errors in Figure~\ref{fig:invint_convergence}. We observe indeed a faster asymptotic decay of the residual for certain sampling schemes in random descent (namely for spherical, normal and Rademacher directions) than for the Landweber iteration, although the residual decays faster for Landweber in the beginning as predicted by our observation in Figure~\ref{fig:invint_factors} and similar for the semiconvergence of the errors.

\begin{figure}
  \centering
  \includegraphics[width=0.7\textwidth]{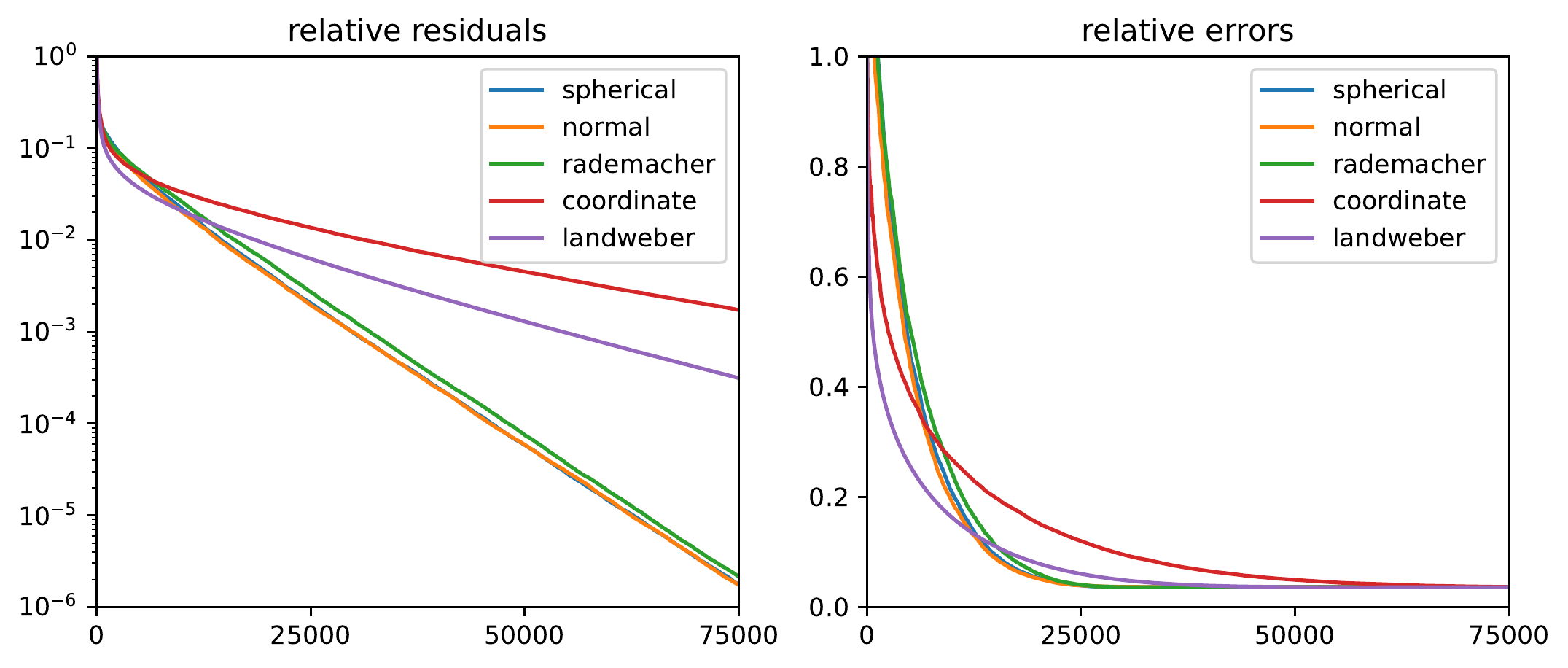}
  \caption{Convergence of residuals and semi-convergence of errors for the ill-posed inverse integration example}
  \label{fig:invint_convergence}
\end{figure}

In Table~\ref{tab:experiment3_illposed} we report values of the best errors achieved by the methods and also the errors achieved with stopping according to Morozov's discrepancy principle. We observe that the adjoint free methods perform comparably well in terms of reconstruction quality, but are in fact faster than the Landweber iteration. 

\begin{table}[htb]
  \centering
 \begin{tabular}{lrrrr}
\toprule
 & Best relative error & iter & Morozov error & stop index\\
\midrule
Spherical  & 0.036 & 33525 &    0.058 &    19038 \\
Normal     & 0.036 & 40526 &    0.057 &    18811 \\
Rademacher & 0.036 & 32714 &    0.054 &    20912 \\
Coordinate & 0.037 & 74983 &    0.053 &    47446 \\
Landweber  & 0.036 & 60421 &    0.052 &    28240 \\
\bottomrule
\end{tabular}

  \caption{Results of the experiment for the ill--posed problem shown in Figure~\ref{fig:invint_convergence}. ``Morozov error'' shows the relative reconstruction error achieved by stopping the iteration with Morozov's discrepancy principle, i.e. when the residual $\norm{Av^{k}-b}$ falls below $c\norm{b-\tilde b}$ with $c=1.001$.}
  \label{tab:experiment3_illposed}
\end{table}

\bibliographystyle{siamplain}
\bibliography{references}
\end{document}